\documentclass[reqno,10pt]{amsart}

\usepackage{graphicx}
\usepackage{comment}
\usepackage{mathdots}
\usepackage{amssymb}
\usepackage{amsmath}
\usepackage{amsfonts}
\usepackage{enumerate}
\usepackage{mathrsfs}
\usepackage{color}
\usepackage{kbordermatrix}
\usepackage[normalem]{ulem}
\usepackage{mathdots}%
\usepackage{url}%

\newcommand{\mcal}{\mathcal}
\newcommand{\q}{\left\{}
\newcommand{\w}{\right\}}
\newcommand{\re}{\mathbb{R}}

\newcommand{\tb}{\textbf}

\newcommand{\ti}{\textit}

\newcommand{\bt}{\beta}

\newcommand{\sbs}{\subseteq}

\newcommand{\opn}{\operatorname}
\newcommand{\rank}{\opn{rank}\,}

\newcommand{\supp}{\opn{supp}\,}

\newcommand{\lp}{\left(}
\newcommand{\rp}{\right)}

\newcommand{\V}{\mathcal {V}}


\newcommand{\boldt}{\mathbf{t}}
\newcommand{\boldw}{\mathbf{w}}
\newcommand{\RR}{\mathbb R}
\newcommand{\NN}{\mathbb N}
\newcommand{\ZZ}{\mathbb Z}
\newcommand{\calZ}{\mathcal Z}
\newcommand{\cZ}{\mathcal Z}

\newcommand{\cI}{\mathcal I}

\newcommand{\ut}{\underline{t}}


\newcommand{\mbf}{\mathbf}

\theoremstyle{plain}
\newtheorem{theorem}{Theorem}[section]
\newtheorem{proposition}[theorem]{Proposition}

\theoremstyle{definition}

\newtheorem{example}[theorem]{Example}

\theoremstyle{remark}
\newtheorem{remark}[theorem]{Remark}

\begin{document}

\title[TMP on the curve $xy=x^m+q(x)$]{Bivariate Truncated Moment Sequences with the Column Relation $XY=X^m + q(X)$, with $q$ of degree $m-1$}

\author[S. Yoo]{ Seonguk Yoo}
\address{Seonguk Yoo \\  Department of Mathematics Education and RINS \\ Gyeongsang National University \\ Jinju, Republic of Korea 52828}
\email{seyoo@gnu.ac.kr}

\author[A.\ Zalar]{Alja\v z Zalar}
\address{Alja\v z Zalar\\ Faculty of Computer and Information Science,
University of Ljubljana \&
Faculty of Mathematics and Physics, University of Ljubljana  \&
Institute of Mathematics, Physics and Mechanics, Ljubljana, Slovenia.}
\email{aljaz.zalar@fri.uni-lj.si}

\thanks{The second-named author was supported by the ARIS (Slovenian Research and Innovation Agency)
research core funding No.\ P1-0228 and grant No.\ J1-50002.}

\keywords{strong truncated moment problem,  recursively generated, algebraic variety}

\subjclass{Primary 47A57, 44A60; Secondary 15-04, 47A20, 32A60}

\begin{abstract} 
When the algebraic variety associated with a truncated moment sequence is finite, solving the moment problem follows a well-defined procedure. However, moment problems involving infinite algebraic varieties are more complex and less well-understood. 
Recent studies suggest that certain bivariate moment sequences can be transformed into equivalent univariate sequences, offering a valuable approach for solving these problems. In this paper, we focus on addressing the truncated moment problem (TMP) for specific rational plane curves. 
For a curve of general degree we derive an equivalent Hankel positive semidefinite completion problem. 
For cubic curves, we solve this problem explicitly, which resolves the TMP for one of the four types of cubic curves, up to affine linear equivalence. 
For the quartic case we simplify the completion problem to a feasibility question of a three-variable system of inequalities.
\end{abstract}
\maketitle

\section{Introduction}

Given a real 2-dimensional multisequence of  degree $m$,  $\beta \equiv\beta^{(m)}= \{  \beta_{00},$ $\beta_{10},$  $\beta_{01},\cdots, \beta_{m,0},\beta_{m-1,1},\cdots, \beta_{1,m-1},\beta_{0,m} \}$ with $\beta_{00}>0$, the \tb{truncated  moment problem} (TMP) entails finding necessary and sufficient conditions for the existence of  a positive Borel measure $\mu$ such that $\supp \mu \sbs\mathbb{R}^2$ and 
\begin{eqnarray*}
\beta_{i,j}\equiv \beta_{( i,j)}=\int x^i y^{j} \,\, d\mu \,\,\,( 0 \le i + j \le 2n;\ i,j \in \mathbb Z_+).
\end{eqnarray*} 
In this context,  we refer to  $\mu$ a \tb{representing measure} (rm) for $\beta$ or the moment matrix $M(n)$ defined below.  
When the order of a moment sequence is even, such as $m = 2n$ for some $n\in {\mathbb N}$, it is possible to define the \tb{moment matrix} $ M(n) \equiv M(n)(\beta^{(2n)})$ of $ \bt$ as follows:
$$
{M}(n)\equiv 
{M}(n)(\beta^{(2n)}):=(\beta_{\textbf{i} +\textbf{j}})_{\textbf{i}, \, \textbf{j}\in \mathbb Z^2_+: |\textbf{i}|,\, |\textbf{j}|\leq n},
$$
{where $\textbf{i}+\textbf{j}$ stands for the coordinate-wise sum and $|\textbf{i}|$ is the sum of coordinates of $\textbf{i}$.}
To guarantee the existence of a representing measure for \(\bt^{(2n)}\), it is essential that the matrix \(M(n)\) is positive semidefinite. However, there are additional conditions that must be met. To examine these conditions, let \(\re[x,y]_{n}\) denote the set of bivariate polynomials in \(\mathbb{R}[x,y]\) with degree at most \(n\). We arrange the columns of \(M(n)\) according to the monomials \(\{\textit{1}, X, Y, X^2, XY, Y^2, \ldots, X^n, \ldots, Y^n\}\) in degree-lexicographic order. For instance, a sextic moment matrix $M(3)$ would appear as follows:
\begin{eqnarray*}
&& \hspace{17mm} \ti{1} \hspace{6mm}   X \hspace{6mm}   Y  \hspace{5mm}  X^2   \hspace{2.5mm}   XY  \hspace{3.5mm}   Y^2 
\hspace{4mm}  X^3   \hspace{2mm}   X^2Y  \hspace{2mm}  X Y^2  \hspace{2.5mm}  Y^3\\
&&\begin{array}{r}
\ \ti{1}\\ X \\ Y\\ X^2\\  XY\\ Y^2 \\ X^3 \\ X^2Y\\ XY^2 \\Y^3
\end{array} \!\!
\left( \begin{array}{c|cc|ccc|cccc}
\beta_{00} & \beta_{10} &\beta_{01} &\beta_{20} &\beta_{11} &\beta_{02} & \beta_{30} & \beta_{21} & \beta_{12} & \beta_{03} \\ \hline
\beta_{10} & \beta_{20} &\beta_{11} &\beta_{30} &\beta_{21} &\beta_{12} & \beta_{40} & \beta_{31} & \beta_{22} & \beta_{13}\\
\beta_{01} & \beta_{11} &\beta_{02} &\beta_{21} &\beta_{12} &\beta_{03} & \beta_{31} & \beta_{22} & \beta_{13} & \beta_{04}\\ \hline
\beta_{20} & \beta_{30} &\beta_{21} &\beta_{40} &\beta_{31} &\beta_{22} & \beta_{50} & \beta_{41} & \beta_{32} & \beta_{23}\\
\beta_{11} & \beta_{21} &\beta_{12} &\beta_{31} &\beta_{22} &\beta_{13} & \beta_{41} & \beta_{32} & \beta_{23} & \beta_{32}\\
\beta_{02} & \beta_{12} &\beta_{03} &\beta_{22} &\beta_{13} &\beta_{04} & \beta_{32} & \beta_{23} & \beta_{32} & \beta_{23}\\ \hline
\beta_{30} & \beta_{40} &\beta_{31} &\beta_{50} &\beta_{41} &\beta_{32} & \beta_{60} & \beta_{51} & \beta_{42} & \beta_{33}\\
\beta_{21} & \beta_{31} &\beta_{22} &\beta_{41} &\beta_{32} &\beta_{23} & \beta_{51} & \beta_{42} & \beta_{33} & \beta_{24}\\
\beta_{12} & \beta_{22} &\beta_{13} &\beta_{32} &\beta_{23} &\beta_{13} & \beta_{42} & \beta_{33} & \beta_{24} & \beta_{15}\\
\beta_{03} & \beta_{13} &\beta_{04} &\beta_{23} &\beta_{13} &\beta_{05} & \beta_{33} & \beta_{24} & \beta_{15} & \beta_{06}\\ 
\end{array} \right).
\end{eqnarray*}

When the matrix $M(n)$ exhibits a column relation,  it can be written as $p(X,Y)={\mathbf 0}$ for some polynomial $p(x,y)=\sum_{ij}a_{ij}x^{i}y^{j}\in \re[x,y]_n$, 
{where each monomial is replaced by the column of $M(n)$ indexed with this monomial and $\mathbf 0$ stands for a zero vector;} this concept is known as functional calculus. 
For convenience, we sometimes refer to this column dependence relation as that of the moment sequence.  
Although column relations themselves are not polynomials, they can be interpreted as such and provide vital information about a representing measure for $M(n)$. Specifically, if $\bt$ has a representing measure, the following condition must be satisfied \cite{CF98}: 
$$ p(X, Y) = \mathbf{0} \implies (p  q)(X, Y) = \mathbf{0} \text{ for every } q \in \re[x,y]\text{ with } \deg(p  q) \leq n.$$
In this case, ${M}(n)$ is said to be \tb{recursively generated} (rg).

Let $\mathcal{Z}(p):=\{(x,y)\in \re^2\colon p(x,y)=0\}$ denote  the zero set of the polynomial $p$. 
{The following result helps determine the support of a representing measure for a truncated moment sequence:

\begin{proposition}\emph{({\cite[Proposition 3.1]{tcmp1}})} \label{r-supp-col} 
Suppose \(\mu\) is a representing measure for \(\beta^{(2n)}\). For \(p \in \re[x,y]_n\),
\[
\operatorname{supp} \, \mu \subseteq \mathcal{Z}(p) \iff p(X,Y) = \mathbf{0}.
\]
\end{proposition}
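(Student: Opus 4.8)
The plan is to reduce both implications to a single computation that re-expresses the entries of the column vector $p(X,Y)$ as moments of $p$ against $\mu$. Write $p(x,y)=\sum_{i,j}a_{ij}x^iy^j$ with $a_{ij}=0$ whenever $i+j>n$. The entry of $p(X,Y)$ in the row indexed by the monomial $x^ry^s$ (for $r+s\le n$) is $\sum_{i,j}a_{ij}\beta_{(i+r,\,j+s)}$. Since $\mu$ is a representing measure, $\beta_{(i+r,\,j+s)}=\int x^{i+r}y^{j+s}\,d\mu$, and so this entry equals $\int x^ry^s\,p(x,y)\,d\mu$. This identity is the engine for both directions, and I would state it first.

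For the implication $\operatorname{supp}\mu\subseteq\mathcal{Z}(p)\Rightarrow p(X,Y)=\mathbf 0$, I would simply note that $\operatorname{supp}\mu\subseteq\mathcal{Z}(p)$ forces $p=0$ $\mu$-a.e.; hence every integral $\int x^ry^s\,p\,d\mu$ vanishes and, by the identity above, $p(X,Y)=\mathbf 0$.

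For the converse I would compute $\int p^2\,d\mu$. Expanding $p^2=\sum_{i,j}a_{ij}\,x^iy^j\,p$ and integrating termwise gives $\int p^2\,d\mu=\sum_{i,j}a_{ij}\int x^iy^j\,p\,d\mu$; since $a_{ij}\ne 0$ only when $i+j\le n$, each integral $\int x^iy^j\,p\,d\mu$ is exactly the $(i,j)$-entry of $p(X,Y)$, which is $0$ by hypothesis. Thus $\int p^2\,d\mu=0$.

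The final, and only non-formal, step is to pass from $\int p^2\,d\mu=0$ to the topological containment of the support. Because $p^2\ge 0$ and $\mu\ge 0$, the vanishing integral forces $p^2=0$ $\mu$-a.e., i.e.\ $\mu\big(\re^2\setminus\mathcal{Z}(p)\big)=0$. As $p$ is continuous, $\mathcal{Z}(p)$ is closed, so $\re^2\setminus\mathcal{Z}(p)$ is an open $\mu$-null set and is therefore disjoint from $\operatorname{supp}\mu$; equivalently $\operatorname{supp}\mu\subseteq\mathcal{Z}(p)$. I expect this last measure-theoretic/topological step — using that the complement of the support is the largest open null set, so that any closed full-measure set contains $\operatorname{supp}\mu$ — to be the main (and essentially the only) point requiring care, the rest being the bookkeeping identity applied twice.
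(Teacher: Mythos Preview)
The paper does not prove this proposition; it merely quotes it from \cite[Proposition 3.1]{tcmp1}. Your argument is correct and is essentially the standard proof given in that reference: one identifies the $(r,s)$-entry of $p(X,Y)$ with $\int x^ry^sp\,d\mu$, uses this to get $p(X,Y)=\mathbf 0$ from $\operatorname{supp}\mu\subseteq\mathcal Z(p)$, and for the converse computes $\int p^2\,d\mu=\langle M(n)\hat p,\hat p\rangle=0$ (your termwise expansion is exactly this inner product), concluding via nonnegativity and the closedness of $\mathcal Z(p)$.
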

Now, we aim to define a set contained the support of representing measures;  the \tb{algebraic variety} of $\beta\equiv\beta^{(2n)}$ is given by 
\begin{equation} \label{variety}
\mathcal{V}_\beta \equiv \mathcal{V}\lp {M(n)}  \rp:=\bigcap {}_{p(x,y)=0,\, \deg \, p\, \leq n}\; \mathcal{Z}%
(p). 
\end{equation}%
Using Proposition \ref{r-supp-col} the following holds: if $M(n)$ admits a representing measure $\mu$, then 
$$
\operatorname{supp}\; \mu \subseteq \mathcal{V}_\beta  
$$
and
$$  
\operatorname{rank}\; {M}(n)\leq \operatorname{card}\;\operatorname{supp}\;\mu \leq \operatorname{card}\V_\beta.
$$
The second inequality in the above is known as  the \tb{variety condition}.
}

Here, we introduce the \tb{Riesz functional} defined on the space of all monomials. For a given moment sequence \(\beta \equiv \beta^{(m)}\), the linear functional \(\Lambda_\beta\) is given by
$$
\Lambda_\beta \left( \sum_{0 \le i + j \le m} a_{i,j} x^i y^j \right) = \sum_{0 \le i + j \le m} a_{i,j} \beta_{i,j},
$$
where all \(a_{i,j}\) are real numbers and $i,j \in \mathbb Z_+$. 
We say that \(\Lambda_\bt\) is \tb{$K$-positive} for a closed set $K\in \re^2$ if 
\[
\Lambda_\bt(p) \geq 0 \ \  \text{for all } p \in \re[x, y]_m \text{ such that } p|_K \geq 0. 
\]
If, in addition, the conditions \(p|_K \geq 0\) and \(p|_K \not\equiv 0\) imply \(\Lambda_\bt(p) > 0\), then \(\Lambda_\bt\) is said to be \tb{strictly $K$-positive}.  
When \(K = \mathbb{R}^2\), we simply refer to \(\Lambda_\beta\) as \tb{positive} rather than $K$-positive. The $K$-positivity of \(\Lambda_\beta\) is a necessary condition for \(\beta\) to have a \textbf{\(K\)-representing measure}, {i.e., a rm supported on $K$}. Conversely, M. Riesz's classical theorem shows that $K$-positivity is also sufficient to guarantee the existence of \(K\)-representing measures for infinite moment sequences. This result was later extended to \(\mathbb{R}^n\) by E. K. Haviland. Similar results are available for the truncated moment problem, see the reference \cite{tcmp12}.


One of the most significant results in truncated moment theory is the Flat Extension Theorem. This theorem states that if \( M(n) \) has a rank-preserving positive extension \( M(n+1) \), then \(\beta^{(2n)}\) possesses a \(\operatorname{rank} M(n)\)-atomic representing measure \cite{tcmp1}. The extension \( M(n+1) \) is referred to as a \tb{flat extension}. A notable special case arises when \(\operatorname{rank} M(n) = \operatorname{rank} M(n-1)\). In this situation, \( M(n) \) is referred to as \tb{flat}, and \(\beta^{(2n)}\) has a unique \(\operatorname{rank} M(n)\)-atomic representing measure.

 We now provide a brief overview of how to find a flat extension of \( M(n) \). Notice that each rectangular block of \( M(n) \) with the same degree moments forms a Hankel matrix. To construct an extension \( M(n+1) \), consider the following form:
\[
M(n+1) = \begin{pmatrix}
M(n) & B \\
B^* & C
\end{pmatrix},
\]
where \( B \) and \( C \) are Hankel matrices with some  new moments of degree \( 2n-1 \) and \( 2n \), respectively.
To ensure that a prospective moment matrix \( M(n+1) \) is positive semidefinite, we need the following classical result:
\begin{theorem}\emph{({\cite{Alb69,Smu}})}
\label{r-smu}
Let \( A \), \( B \), and \( C \) be matrices of complex numbers, with \( A \) and \( C \) being square matrices. Then
\[
\begin{pmatrix}
A & B \\
B^* & C
\end{pmatrix} \geq 0 \iff 
\begin{cases}
A \geq 0, \\
B = AW \text{ for some } W \\
C \geq W^* AW.
\end{cases}
{
\iff
\begin{cases}
A \geq 0, \\
B = AW \text{ for some } W, \\
C \geq B^* A^\dagger B,
\end{cases}
}
\]
where $A^\dagger$ stands for the Moore-Penrose inverse of $A$.
Moreover, 
\[
\operatorname{rank}\begin{pmatrix}
A & B \\
B^* & C
\end{pmatrix} = \operatorname{rank} A \iff C = B^\ast A^\dagger B.
\]
\end{theorem}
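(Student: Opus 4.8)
The statement is the classical Albert--Smul'jan criterion, and the plan is to reduce the block matrix $M=\begin{pmatrix} A & B \\ B^* & C\end{pmatrix}$ to block-diagonal form by a congruence transformation, after first extracting the range condition from positivity. I would begin with the elementary necessary conditions: testing $M\geq 0$ against vectors supported on the first block of coordinates gives $A\geq 0$, and against those supported on the second gives $C\geq 0$. To obtain the solvability of $B=AW$ I would take $v\in\ker A$ and evaluate the quadratic form of $M$ on $\begin{pmatrix} tv \\ w\end{pmatrix}$ with $t\in\re$ and $w$ arbitrary. Since the $v^*Av$ term vanishes, nonnegativity of the resulting affine function of $t$ for all $t$ forces the coefficient $\operatorname{Re}(v^*Bw)$ to vanish for every $w$, whence $B^*v=0$. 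Thus $\ker A\subseteq\ker B^*$, which is exactly $\operatorname{range}(B)\subseteq\operatorname{range}(A)$, and this is equivalent to the existence of $W$ with $B=AW$.

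The core step is the congruence. Once $B=AW$ is available, I would conjugate $M$ by the invertible block-triangular matrix $L=\begin{pmatrix} I & -W \\ 0 & I\end{pmatrix}$. A direct computation, using $B=AW$ together with $A=A^*$ so that the off-diagonal blocks cancel, yields
\[
L^*ML=\begin{pmatrix} A & 0 \\ 0 & C-W^*AW\end{pmatrix}.
\]
Because $L$ is invertible, this congruence preserves both positive semidefiniteness and rank. Positive semidefiniteness of the right-hand side is plainly equivalent to $A\geq 0$ and $C-W^*AW\geq 0$; combined with the range condition this establishes the first stated equivalence.

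To pass from $W^*AW$ to the canonical quantity $B^*A^\dagger B$, I would show that $W^*AW$ is in fact independent of the particular solution $W$ of $AW=B$. The choice $W_0=A^\dagger B$ solves $AW_0=AA^\dagger B=B$, since $AA^\dagger$ is the orthogonal projection onto $\operatorname{range}(A)$ and the columns of $B$ lie in that range. Any other solution differs by $D=W-W_0$ with $AD=0$, hence also $D^*A=(AD)^*=0$. Expanding $W^*AW=(W_0+D)^*A(W_0+D)$ then annihilates all cross terms and the $D^*AD$ term, leaving $W^*AW=W_0^*AW_0=B^*A^\dagger B$ after using $A^\dagger AA^\dagger=A^\dagger$. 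This shows the two displayed conditions are literally identical, giving the second equivalence. For the rank assertion I would invoke that congruence preserves rank, so $\operatorname{rank}M=\operatorname{rank}A+\operatorname{rank}(C-B^*A^\dagger B)$; hence $\operatorname{rank}M=\operatorname{rank}A$ holds if and only if $C-B^*A^\dagger B=0$, i.e.\ $C=B^*A^\dagger B$.

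The main obstacle, and the only place requiring genuine care, is the singular case of $A$: when $A$ is invertible one simply takes $W=A^{-1}B$ and recovers the ordinary Schur complement $C-B^*A^{-1}B$, but for singular $A$ one must first justify the range condition from positivity (the $\ker A$ argument above) and then verify that the Moore--Penrose inverse reproduces the Schur complement correctly. The observation that $W^*AW$ does not depend on the choice of solution $W$ is precisely what makes the $A^\dagger$ formulation well defined and removes any apparent ambiguity in the criterion.
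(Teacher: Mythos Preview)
The paper does not prove this theorem; it is quoted as a classical result from Albert and Smul'jan and used as a black box throughout. Your argument is the standard and correct one: obtain the range condition $\ker A\subseteq\ker B^*$ from positivity by testing on vectors $\begin{pmatrix} tv\\ w\end{pmatrix}$ with $v\in\ker A$, then perform the block congruence with $L=\begin{pmatrix} I & -W\\ 0 & I\end{pmatrix}$ to reduce $M$ to $\operatorname{diag}(A,\,C-W^*AW)$, and finally show $W^*AW=B^*A^\dagger B$ is independent of the solution $W$ of $AW=B$. All steps are carried out correctly.

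One small point worth making explicit in the rank assertion: your backward implication ($C=B^*A^\dagger B\Rightarrow\operatorname{rank}M=\operatorname{rank}A$) uses the congruence and hence presupposes $B=AW$. Without that range condition the implication can fail (take $A=0$, $B\neq 0$, $C=0$). In the paper the ``Moreover'' clause is only ever applied under the positive semidefiniteness hypothesis, where the range condition is automatic, so this is not an issue in context; but strictly as a free-standing equivalence it needs the range condition (or $M\geq 0$) as part of the hypothesis.
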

\noindent 
Although finding the positive extension may appear straightforward, verifying that the $C$-block is indeed a Hankel matrix is not trivial.

Recently, many intriguing interactions between moment theory and algebraic geometry have been uncovered. Solving truncated moment problems can be interpreted as finding the roots of a system of multivariate polynomial equations. {By Richter's result \cite{Ric57} (see also \cite[Theorem 1.24]{Sch17}), up to recently more often credited to Bayer and Teichmann \cite{BaTe}}, if a moment sequence \(\beta^{(2n)}\) has one or more representing measures, then at least one of these measures must be finitely atomic. Consequently, if a real sequence \(\beta^{(2n)}\) is associated with a finitely atomic representing measure \(\mu\), it can be expressed as
\[
\mu = \sum_{\ell=1}^r \rho_\ell \delta_{(x_\ell , y_\ell )} ,
\]
where \( r \leq \dim \mathbb{R}[x,y]_{2n}\). 
Our task is to determine positive numbers \(\rho_1, \ldots, \rho_r\) (referred to as \emph{densities}) and points \((x_1, y_1), \ldots, (x_r, y_r)\) (referred to as \emph{atoms}) of the measure $\mu$, such that for \(i, j \in \mathbb{Z}_+\) and \(0 \leq i + j \leq 2n\),
\[
\beta_{i,j} = \rho_1 x_1^i y_1^{j} + \cdots + \rho_\ell x_\ell^i y_\ell^{j} . 
\]

\subsection*{Degree-One Transformation}
The following discusses a method for simplifying the moment problem using {invertible} affine linear transformations (alt), specifically the invariance of moment problems under degree-one transformations. The complex version of this approach is detailed in \cite{tcmp6}, and we adopt the same notation to develop its real counterpart.

For $a,b,c,d,e,f\in \mathbb{R}$ with $bf \neq ce$, define 
$$
\Psi(x,y) \equiv (\Psi_1(x,y), \Psi_2(x,y)) := (a + bx + cy, d + ex + fy)\quad \text{for }
x, y \in \mathbb{R}.
$$
If $\Lambda_\beta$ represents the Riesz functional associated with a given $\beta \equiv \beta^{(2n)}$, then we can construct a new equivalent moment sequence $\tilde{\beta}^{(2n)}$ with $\tilde{\beta}_{i,j} := \Lambda_\beta(\Psi_1^i \Psi_2^{j})$ for \(i, j \in \mathbb{Z}_+\) and \(0 \leq i + j \leq 2n\). It follows that $\Lambda_{\tilde{\beta}}(p) = \Lambda_\beta(p \circ \Psi)$ for all $p \in \re[x,y]_n$. For more details,  refer to \cite{tcmp6}. 








\subsection*{Truncated Moment Sequences with an Infinite Algebraic Variety} 
The moment matrix \(M(n)(\beta^{(2n)})\) (or the moment sequence) is said to be \tb{$p$-pure} if its only column relations are those recursively derived from a polynomial \(p \in \mathbb{R}[x, y]_n\). Thus, \(\mathcal{V}_\beta\) is precisely \(\mathcal{Z}(p)\); in other words, the algebraic variety of \(\beta\) is infinite. When the algebraic variety associated with a truncated moment sequence is finite, a clear procedure exists for solving the moment problem \cite{Fia08}. However, concrete solutions for \(M(n)\) with \(n \geq 3\) are scarce and challenging to study {\cite{Fia11,Yoo17i,YoZa,Zal21,Zal22l,Zal22j,Zal24}}. 

\subsection*{Main Results}
{
In this paper we focus on  the TMP on curves of the form 
    $$
    xy=q_mx^m+q(x)+\alpha y,
    \quad \text{where }
    q_m\in \RR\setminus\{0\},\;
    q(x)\in \RR[x]_{m-1}, \;\alpha\in \RR.$$
By applying the alt $(x, y) \mapsto (x + \alpha, q_my)$,  it suffices to solve the TMP on curves with the simpler form
\begin{equation}
    \label{TMP:studied}
    xy =x^m+r(x),\quad
    \text{where }r(x)\in \RR[x]_{m-1}.
\end{equation}
These curves have a parametrization $(x(t),y(t))=(t,t^{m-1}+\frac{r(t)}{t})$, $t\in\RR$, $t\neq 0$. The TMP on any rational curve is equivalent to a univariate TMP, where some moments are missing and the measure must vanish in certain points. This observation simplifies solving the original TMP, since univariate TMPs are easier to tackle and related technique was already exploited in \cite{YoZa,Zal21,Zal22l,Zal22j,Zal24}. 
To adress the TMP on \eqref{TMP:studied}, the solution to the the strong Hamburger TMP \cite{Zal22j} is required, i.e., the $\RR$--rm of the univariate sequence must vanish in $\{0\}$. 
The original motivation for this paper was the cubic relation of type \eqref{TMP:studied} with $m=3$,
since after applying an alt, every cubic relation has one of four canonical forms, with type \eqref{TMP:studied} and $m=3$ being one of them. The other three types are $y=q(x)$, $y^2=q(x)$ and $xy^2+ay=q(x)$ for some $q\in \RR[x]_{3}$ and $a\in \RR$. A concrete solution to $y=q(x)$ is known \cite{Fia11}, while concrete solutions to the other two types are known  for $y^2=x^3$ \cite{Zal21} and $xy^2=1$ \cite{Zal22j}.

Our first main result (see Theorem \ref{r-main1}) 
is the solution to the TMP 
on \eqref{TMP:studied} in terms of the corresponding univariate TMP with gaps. The solution is a  
Hankel positive semidefinite (psd) completion problem, i.e., the question is when missing anti-diagonals of a partially defined Hankel matrix can be chosen so that the completion is psd and satisties two additional constraints, coming from the solution to the strong Hamburger TMP. Our second main result (see Theorems \ref{r-main3} and \ref{r-main3-singular}) solves this completion problem corresponding to the cubic case ($m=3$ in \eqref{TMP:studied}) in terms of concrete numerical conditions. We also bound the number of atoms in a representing measure with the lowest number of atoms and demonstrate the solution on a numerical example (see Example \ref{eg-1}).
Our third main result (see Theorems \ref{sec:m=4-pure} and \ref{r-main4-non-definite}) solves the completion problem for the quartic case ($m=4$ in \eqref{TMP:studied}) in terms of feasibility of a three-variable system of inequalities. 
}


\section{TMP on \(xy = x^m+ \sum_{s=0}^{m-1} q_s x^s \) with $q_0\neq 0$} 
\label{general-case}

In this section we prove that the $\cZ(p)$--TMP for 
\( p(x, y) = xy -  x^m-\sum_{s=0}^{m-1} q_s x^s \), where each $q_s\in \RR$ and $q_0\neq 0$, is equivalent to the Hankel positive semidefinite completion problem (see Theorem \ref{r-main1}).\\

Let $p$ be as in the first paragraph. By using an alt we may assume that $q_m=1$.
Let  \( \beta \equiv \q \beta_{i,j}^{(2n)} \w \) be a sequence with for $i,j\in \ZZ_+$, $i+ j \le 2n$.
    Let us reorder the indices  $$-2n,-2n+1,\ldots,-1,0,1,\ldots,2(m-1)n-1,2(m-1)n$$
    in the following way:
    \begin{align}
    \label{order-v2}
    \begin{split}
        \text{Row }0&:\quad  0,1,\ldots,2n,\\
        \text{Row }1&:\quad -1,2n+1,\ldots,(2n-1)+m-1,\\
        \text{Row }2&:\quad -2,(2n-1)+m,\ldots,(2n-2)+(m-1)2,\\
        \vdots \phantom{sss} & \\
        \text{Row }k&:\quad -k,(2n-k+1)+(m-1)(k-1)+1,\ldots,(2n-k)+(m-1)k,\\
        \vdots \phantom{sss} &\\
        \text{Row }2n&:\quad -2n,(2n-1)+(m-1)(2n-1)+1,\ldots,(m-1)2n.
    \end{split}
    \end{align}
    Now we adapt Row 1 to Row $2n$, while rewriting Row 0, 
    in the following way
    $$
    \text{Row }k:\quad -k,h(k),h(k)+1,\ldots,(2n-k)+(m-1)k,
    $$
    where 
    \begin{equation}
    \label{def-h}
        h(k):=\max\{2n-k+1+(m-1)(k-1),(m-1)k\}+1
    \end{equation}
    \begin{remark}
    \label{rem:adaptation}
        The reason for this adaptation is the fact that expressing $y$ from the equality $p(x,y)=0$ and then
        raising to the power of the index of the row the relation will be of the form
        $$y^k=\Big(\sum_{s=0}^{m}q_sx^{s-1}\Big)^k=
        q_m^{k}x^{(m-1)k}+\sum_{i=-k}^{(m-1)k} r_i x^i$$
    for some $r_i\in \RR$. Multiplying this relation with $x,\ldots,x^{2n-k}$, we can successively express $x^{(m-1)k+j}$, $j=1,\ldots,2n-k$, out of the equations obtained.
    If $(m-1)k$ is larger than $(2n-k+1)+(m-1)(k-1)$,  then some powers of $x$ will be missing in this procedure.
    The missing powers will be precisely 
    $(2n-k+1)+(m-1)(k-1)+1,\ldots,h(k)$.
    \end{remark}
As explained in Remark \ref{rem:adaptation}, the adaptation may result in the loss of some indices in each row. 
    Let $\cI$ be the set of indices remaining in the sequence after this adaptation.
    We define a map
    $f$ on $\cI$ 
    by the rule
    \begin{align}
    \label{function-f}
    \begin{split}
        f(s)
        &\equiv (f_1(s),f_2(s))\\
        &:=
        \left\{
        \begin{array}{rl}
        \big(s-\#(s)(m-1),\#(s)\big),&    \text{if }s\geq 0,\\[0.2em]
        \big(0, \#(s)\big),& \text{if }s< 0,
        \end{array}
        \right.
    \end{split}
    \end{align}
    where  the index $s$ is contained in  Row $\#(s)$.  

Expressing $y$ from the relation $p(x,y)=0$, 
we see that for $i,j\in \ZZ_+$
we have 
\begin{align}
\label{relation-1}
\begin{split}
 &x^i\lp\sum_{s=0}^{m} q_s x^{s-1}\rp^{j}\\ 
 & \qquad  =x^i \lp  
 \sum_{\substack{k_0 + \ldots + k_{m} = j,\\ k_0,\ldots,k_{m}\in \mathbb Z_+}} \frac{j!}{k_0 ! \cdots k_{m} ! }  
 q_m^{k_0}q_{m-1}^{k_{1}}\cdots q_{0}^{k_{m}}  
 x^{\sum_{s=0}^{m}(m-1-s)k_s}  \rp\\
 & \qquad =  
  \sum_{\substack{k_0 + \ldots + k_m = j,\\ k_0,\ldots,k_m\in \mathbb Z_+ }}  \frac{j!}{k_0 ! \cdots k_m !}
  q_m^{k_0}q_{m-1}^{k_{1}}\cdots q_{0}^{k_{m}}
  x^{\sum_{s=0}^{m}(m-1-s)k_s+i}\\
  & \qquad=
    \sum_{t=-j}^{(m-1)j}q_{j,t}x^{t+i},
  \end{split}
\end{align}
where
\begin{equation*}
\label{coef-q}
    q_{j,t}:=
             \displaystyle\sum_{\substack{k_0 + \ldots + k_m = j,\\ k_0,\ldots,k_m\in \mathbb Z_+,\\
             \sum_{s=0}^{m}(m-1-s)k_s=t
             }}  \frac{j!}{k_0 ! \cdots k_m !}
            q_m^{k_0}q_{m-1}^{k_{1}}\cdots q_{0}^{k_{m}}
\end{equation*}
Note that 
    $$
    q_{j,(m-1)j}=q_m^j=1
    \quad
    \text{and}
    \quad
    q_{j,-j}=q_0^j.
    $$
Hence,
\begin{equation}
\label{largest-term}
x^{(m-1)j+i}
=
x^i\lp\sum_{s=0}^{m} q_s x^{s-1}\rp^{j}
-\sum_{t=-j}^{(m-1)j-1} q_{j,t} x^{t+i}.
\end{equation}
and
\begin{equation}
\label{lowest-term}
x^{-j}
=\frac{1}{q_0^j}
\left[
x^i\lp\sum_{s=0}^{m} q_s x^{s-1}\rp^{j}
-\sum_{t=-j+1}^{(m-1)j} q_{j,t} x^{t+i}
\right].
\end{equation}

    Using the relations \eqref{relation-1}--\eqref{lowest-term} 
    we define a number $\gamma_s$ for every $s\in \cI$
    following the order \eqref{order-v2}
    by
    \begin{equation}
    \label{def-gamma-s}
        \gamma_s:=
        \left
        \{
        \begin{array}{rl}
            \displaystyle \beta_{f(s)}-\sum_{t=-f_2(s)}^{(m-1)f_2(s)-1}
            q_{f_2(s),t}
            \gamma_{t+f_1(s)},&
                \text{if }s\geq 0,\\
            \displaystyle 
            \frac{1}{q_0^{f_2(s)}}
            \Big(
            \beta_{f(s)}- \sum_{t=-f_2(s)+1}^{(m-1)f_2(s)}q_{f_2(s),t}\gamma_{t}\Big),&
                \text{if }s<0.
        \end{array}
        \right.
    \end{equation}
    Namely, we define $\gamma_s$ in the order
    \begin{align}
    \label{ordering-2}
    \begin{split}
        &
        \underbrace{\gamma_0,\gamma_1,\ldots,\gamma_{2n}}_{j=0},
        \underbrace{\gamma_{-1},\gamma_{h(1)},\gamma_{h(1)+1}\ldots,\gamma_{2n+m}}_{j=1},\ldots,
        \underbrace{\gamma_{-2n+1},\gamma_{h(2n-1)+1}}_{j=2n-1},
        \underbrace{\gamma_{-2n}}_{j=2n}.
    \end{split}
    \end{align}
    If $s$ does not appear in $\cI$, then we call $\gamma_s$ a \textbf{free moment}.
    If $s\in \cI$ and in the definition of $\gamma_s$ there also exist $\gamma_{j}$ which are free moments, 
    then $\gamma_{s}$
    is not uniquely determined and we call it an \textbf{auxiliary moment}.
    If $\gamma_s$ is not free or auxiliary, then it is called a \textbf{fully--determined moment}.

{    
    Let $k\in \NN$.
For 
		$v=(v_0,\ldots,v_{2k} )\in \RR^{2k+1}$
we define the corresponding Hankel matrix as
	\begin{equation*}
		A_{v}:=\left(v_{i+j} \right)_{i,j=0}^k
					=\left(\begin{array}{ccccc} 
							v_0 & v_1 &v_2 & \cdots &v_k\\
							v_1 & v_2 & \iddots & \iddots & v_{k+1}\\
							v_2 & \iddots & \iddots & \iddots & \vdots\\
							\vdots 	& \iddots & \iddots & \iddots & v_{2k-1}\\
							v_k & v_{k+1} & \cdots & v_{2k-1} & v_{2k}
						\end{array}\right).
	\end{equation*}
}
    
    The main result of this section is the following solution to the $\calZ(p)$--TMP for $\beta$.
    
    \begin{theorem}\label{r-main1} 
    Let $ p(x,y)= xy -\sum_{s=0}^m q_sx^s$ with $q_i\in\re$, $q_0\neq 0$ $q_m=1$.
    Given a sequence \( \beta \equiv \q \beta_{i,j}^{(2n)} \w \) for $i,j\in \ZZ_+$, $i+ j \le 2n$,
 let  
    $$\gamma\equiv\gamma^{(-2n,2(m-1)n)}= 
    (\gamma_{-2n}, \gamma_{-2n+1}, \ldots,\gamma_{-1},\gamma_{0},\gamma_{1},\ldots,\gamma_{2(m-1)n})$$
 be defined by the procedure above
 and
 suppose $\mathbf{\gamma_{j_1},\ldots,\gamma_{j_p}}$
 are free moments.
Then the following are equivalent: 
\begin{enumerate} 
\item[(i)] $\beta$    admits a $\calZ(p)$--representing measure. 
\item[(ii)] $\gamma$ has  a $\RR$--representing measure $\mu$ 
    for some choice of real values of free moments
    such that $\mu(\{0\})=0$.
\item[(iii)] 
    {There is a
    choice of real values of free moments
    such that 
    $A_{\gamma}$ is positive semidefinite and one of the following holds:
    \begin{enumerate}
        \item $A_{\gamma}$ is positive definite.
        \item 
        $\rank A_{\gamma}
        =\rank A_{\gamma^{(-2n,2(m-1)n-2)}}
        =\rank A_{\gamma^{(-2n+2,2(m-1)n)}}.$
    \end{enumerate}
    }
	\end{enumerate}
\end{theorem}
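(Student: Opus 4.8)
The whole point of the elaborate indexing in \eqref{order-v2}--\eqref{def-gamma-s} is to set up a dictionary between the bivariate moment data $\beta_{i,j}$ on the curve $\calZ(p)$ and a univariate sequence $\gamma_s$, $-2n\le s\le 2(m-1)n$, indexed over negative and positive powers of $x$. I would first make this dictionary precise: using the parametrization $(x(t),y(t))=(t,\sum_{s=0}^m q_s t^{s-1})$ and the expansions \eqref{relation-1}, every monomial $x^iy^j$ restricted to $\calZ(p)$ (minus the point $x=0$, where $y$ is undefined) equals a Laurent polynomial $\sum_t q_{j,t}x^{t+i}$ in $x$ alone. Thus a measure $\mu$ supported on $\calZ(p)\setminus\{x=0\}$ pushes forward under $(x,y)\mapsto x$ to a measure $\nu$ on $\RR\setminus\{0\}$, and the defining recursion \eqref{def-gamma-s} is exactly the statement that $\gamma_s=\int x^s\,d\nu$ whenever $\gamma_s$ is fully determined. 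The free and auxiliary moments are precisely the $\gamma_s$ that the relations \eqref{largest-term}--\eqref{lowest-term} do not pin down from the $\beta_{i,j}$.

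\textbf{For (i)$\Rightarrow$(ii).} Suppose $\mu$ is a $\calZ(p)$--representing measure for $\beta$. Since $q_0\neq 0$, the point $(0,y)$ lies on $\calZ(p)$ only if $0=-q_0$, which is impossible, so $\calZ(p)$ contains no point with $x=0$; hence the pushforward $\nu$ of $\mu$ under $x$-projection is a measure on $\RR$ with $\nu(\{0\})=0$, and one checks via \eqref{relation-1} that its moments $\int x^s\,d\nu$ satisfy the recursion defining $\gamma_s$, so assigning the free moments the values $\int x^{j_1}\,d\nu,\ldots,\int x^{j_p}\,d\nu$ makes $\nu$ a representing measure for the full sequence $\gamma$. \textbf{For (ii)$\Rightarrow$(i)}, conversely, given an $\RR$--representing measure $\mu$ for $\gamma$ with $\mu(\{0\})=0$, I would push it forward along the parametrization $t\mapsto(t,\sum_s q_st^{s-1})$ (well-defined off $t=0$) and use \eqref{relation-1} together with the recursion \eqref{def-gamma-s} to verify that the resulting measure on $\calZ(p)$ reproduces every $\beta_{i,j}$; the condition $\mu(\{0\})=0$ is exactly what guarantees the pushforward is a genuine finite measure and recovers the negative-index moments correctly. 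This is where the strong Hamburger TMP enters: the requirement that the representing measure vanish at $0$ is precisely the vanishing condition for the strong moment problem.

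\textbf{For (ii)$\Leftrightarrow$(iii)}, I would invoke the solution to the strong Hamburger TMP of \cite{Zal22j}. The sequence $\gamma$ has entries indexed from $-2n$ to $2(m-1)n$; however, the Hankel matrix $A_\gamma$ built in the statement is a genuine strong-moment (two-sided) Hankel matrix, and the strong Hamburger TMP characterizes existence of a representing measure $\mu$ on $\RR$ with $\mu(\{0\})=0$ in terms of (a) positive definiteness of $A_\gamma$, or (b) positive semidefiniteness together with the two rank-stability conditions $\rank A_\gamma=\rank A_{\gamma^{(-2n,2(m-1)n-2)}}=\rank A_{\gamma^{(-2n+2,2(m-1)n)}}$, the two truncations corresponding to deleting the outermost positive and negative anti-diagonals. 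The two rank equalities encode that no atom escapes to $\pm\infty$ and, crucially via the lower truncation, that no atom sits at $0$. I would cite \cite{Zal22j} for this equivalence verbatim and note only that the free moments provide the freedom to complete the partially known Hankel matrix.

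\textbf{The main obstacle} is the bookkeeping in (i)$\Leftrightarrow$(ii): one must verify that the recursion \eqref{def-gamma-s}, applied in the order \eqref{ordering-2}, is consistent (i.e.\ every $\gamma_s$ appearing on the right-hand side has already been defined) and that it correctly reproduces \emph{all} $\beta_{i,j}$ with $i+j\le 2n$, including the subtle cases where the adaptation via $h(k)$ in \eqref{def-h} has removed indices. The asymmetry between the $s\ge 0$ branch (using \eqref{largest-term}) and the $s<0$ branch (using \eqref{lowest-term}, where division by $q_0^{f_2(s)}$ requires $q_0\neq 0$) must be tracked carefully, and one has to confirm that the free/auxiliary moments are exactly the degrees of freedom that the strong Hamburger completion problem leaves open, so that the quantifier ``for some choice of free moments'' matches on both sides of (ii) and (iii).
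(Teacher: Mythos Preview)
Your proposal is correct and follows essentially the same approach as the paper: the equivalence (ii)$\Leftrightarrow$(iii) is cited verbatim from \cite{Zal22j}, and (i)$\Leftrightarrow$(ii) is the pushforward correspondence along the parametrization $t\mapsto\big(t,\sum_s q_st^{s-1}\big)$, with the recursion \eqref{def-gamma-s} serving as the dictionary between $\beta_{i,j}$ and $\gamma_s$. The only cosmetic difference is that the paper first reduces to finitely atomic measures via Richter's theorem and then carries out explicitly the inductive bookkeeping you flag as the main obstacle, including the case where $(i,j_0)$ lies outside the image of $f$ and the recursively--generated relation $\beta_{i,j_0}=\sum_s q_s\beta_{i-1+s,j_0-1}$ must be invoked.
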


\begin{proof}
The equivalence $(ii)\Leftrightarrow (iii)$ is \cite[Theorem 3.1]{Zal22j}. It remains to prove the equivalence $(i)\Leftrightarrow (ii)$.
By \cite{Ric57} (see also \cite[Theorem 1.24]{Sch17}), it suffices to prove $(i)\Leftrightarrow (ii)$ for finitely atomic measures, and hence 
 it is enough  to establish the following claim.\\

	\noindent\textbf{Claim.} 
		Let $r\in \NN$.
		A sequence 
			$\gamma$ admits a $r$--atomic $\RR$--rm vanishing in $\{0\}$
		if and only if 
			$\beta$ admits a $r$--atomic $\cZ(p)$--rm.\\

\noindent \textit{Proof of Claim.}  
First we prove the forward implication. 
Let 	
    $\mu_x=\sum_{\ell=1}^r \rho_\ell \delta_{x_\ell}$
be a $\RR$--rm for $\gamma$ where $x_\ell\in \RR\setminus \{0\}$ and $\rho_\ell>0$ for each $\ell$.
We will prove that 
$\mu=\sum_{\ell=1}^r \rho_\ell \delta_{(x_\ell,y_\ell)}$,
where $y_\ell=\sum_{s=0}^{m}q_sx^{s-1}$, is a 
$\calZ(p)$--rm for $\beta$.
We use induction on the index $j$ in $\beta_{i,j}$, where $i+j\leq 2n$:\\

\noindent \textbf{Base of induction:} 
For $j=0$,  we see that
	\begin{align*}
	  \beta_{i,0}
        &=\gamma_i
        =\sum_{\ell=1}^r \rho_\ell x_\ell^{i}
        =\sum_{\ell=1}^r \rho_\ell x_\ell^{i}y_\ell^0,
	\end{align*}
where we used \eqref{def-gamma-s} in the first equality
and 
$\gamma_{i}=\int x^{i} d\mu_x$ in the second.\\

\noindent \textbf{Induction step:}
Assume that the Claim holds for every $j\leq j_0-1$ for some $1\leq j_0\leq 2n$. Let us prove its validity for $j_0$. We consider two cases separately. \\

\noindent
\textbf{Case 1:} \textit{$(i,j_0)$ is in the image of $f$.}

Let $s=f^{-1}((i,j_0))$. Then 
we have 
	\begin{align*}
	  \beta_{i,j_0}
		&=\sum_{t=-j}^{(m-1)j_0}q_{j_0,t}\gamma_{t+i}\\
        &=\sum_{t=-j_0}^{(m-1)j}
            \Big(q_{j_0,t}
            \Big(\sum_{\ell=1}^r \rho_\ell x_\ell^{t+i}\Big)\Big)\\
        &=\sum_{\ell=1}^r \Big(\rho_\ell
            \sum_{t=-j_0}^{(m-1)j_0}
            q_{j_0,t}
            x_\ell^{t+i}\Big)\\
        &=\sum_{\ell=1}^r 
            \Big(\rho_\ell x_\ell^i
            \sum_{t=-j_0}^{(m-1)j_0}q_{j_0,t}
            x_\ell^{t}\Big) 
        \\
	&=\sum_{\ell=1}^r 
            \rho_\ell x_\ell^i y_\ell^{j_0},
	\end{align*}
where we used \eqref{def-gamma-s} in the first equality,
$\gamma_{t+i}=\int x^{t+i} d\mu_x$ in the second,
we interchanged the order of summation in the third,
factored out $x^i_\ell$ from the inner sum in the fourth
and used \eqref{relation-1} for $i=0$ in the fifth.\\

\noindent
\textbf{Case 2:} \textit{$(i,j_0)$ is not in the image of $f$.}

Since $(i,j_0)$ is not in the image of $f$, this means that 
\begin{equation}
\label{not-in-the-image}
    i\neq 0
    \quad \text{and}\quad
    i+(m-1)j_0\leq (2n-j_0+1)+(m-1)(j_0-1).
\end{equation}
Indeed, the first condition in \eqref{not-in-the-image} is clear, since $f(-j)=(0,j)$ for every $0\leq j\leq 2n$, while the second inequality implies that 
$f(i+(m-1)j_0)=(i+m-1,j_0-1)$. If $(i,j_0)$ was in the image of $f$, then 
$f^{-1}((i,j_0))=i+(m-1)j_0$.
The second inequality in \eqref{not-in-the-image} is equivalent to
\begin{equation}
\label{bound-on-i}
    i\leq -m+2n-j_0+2.
\end{equation}
Since the moment sequence must be rg, we must have 
\begin{equation}
\label{rg}
    \beta_{i,j_0}
    =
    \sum_{s=0}^m q_s\beta_{i-1+s,j_0-1}.
\end{equation}
Since $0\leq i-1+s$ for each $s$, $0\leq j_0-1$ and
\begin{align*}
    i+s+j_0-2
    \leq i+m+j_0-2 
    &\leq -m+2n-j_0+2+m+j_0-2=2n,
\end{align*}
where we used $s\leq m$ in the first inequality and
\eqref{bound-on-i} in the second,
it follows that 
each $\beta_{i-1+s,j_0-1}$ in \eqref{rg}
is a part of the original sequence.
We now  see that
\begin{align*}
    \beta_{i,j_0}
    &=
    \sum_{s=0}^m q_s\beta_{i-1+s,j_0-1}\\
   & =
    \sum_{s=0}^m q_s
    \Big(\sum_{\ell=1}^r \rho_\ell x_\ell^{i-1+s}y_\ell^{j_0-1}\Big)\\
    &=
    \sum_{\ell=1}^r
    \rho_\ell x_\ell^iy_\ell^{j_0-1}
    \Big(
    \sum_{s=0}^m \rho_s x_\ell^{s-1}\Big)\\
     &=
    \sum_{\ell=1}^r
    \rho_\ell x_\ell^iy_\ell^{j_0-1}y_\ell \\
    &=
    \sum_{\ell=1}^r
    \rho_\ell x_\ell^iy_\ell^{j_0},  
\end{align*}
where we used \eqref{rg} in the first equality, induction hypothesis in the second, rearranged the double sum in the third and used
\eqref{relation-1} for $i=0$, $j=1$ in the fourth equality.
This concludes the induction step and proves the forward implication.\\

	It remains to prove the backward implication of Claim. 
 Let $\mu=\sum_{\ell=1}^r \rho_\ell \delta_{(x_\ell,y_\ell)}$
 be a $\calZ(p)$--rm for $\gamma$, 
 where $(x_\ell,y_\ell)\in \calZ(p)$ and 
 $\rho_\ell>0$ for each $\ell$.
We will prove that 
$\mu_x=\sum_{\ell=1}^r \rho_\ell \delta_{x_\ell}$
is a rm for $\beta$ which by construction vanishes on $\{0\}$ (since each $x_\ell\neq 0$).
We use induction on the index $i$ in $\gamma_i$ according to the ordering \eqref{order-v2}. For $i=0$,  we have
$\gamma_0=\beta_{0,0}=\sum_{\ell=0}^r \rho_\ell x_\ell^0$
and the statement holds.
Assume now that the statement holds up to some index $s_0$
in \eqref{order-v2} and prove it for $s_1$. We consider two cases separately. \\

\noindent
\textbf{Case 1:} \emph{$\gamma_{s_1}$ is a free moment.}

In this case we are able to  define 
    $\gamma_{s_1}=\sum_{\ell=1}^r\rho_\ell x_\ell^{s_1}$.\\

\noindent
\textbf{Case 2:} \emph{$\gamma_{s_1}$ is not a free moment.}

In this case, $\gamma_{s_1}$ is fully-determined or auxiliary moment, but in both cases $s_1\in \cI$. Let us write
$f(s_1)=(i_1,j_1)$.                                                                                                
If $s_1\geq 0$, then
\begin{align*}
\gamma_{s_1}
&=\beta_{i_1,j_1}-\sum_{t=-j_1}^{(m-1)j_1-1}
            q_{j_1,t}
            \gamma_{t+i_1}\\
&=\sum_{\ell=1}^r\rho_\ell x_\ell^{i_1}y_\ell^{j_1}-
\sum_{t=-j_1}^{(m-1)j_1-1}
            q_{j_1,t}
            \Big(\sum_{\ell=1}^r \rho_\ell x_\ell^{t+i_1}\Big)\\
&=\sum_{\ell=1}^r\rho_\ell x_\ell^{i_1}
\Big(y_\ell^{j_1}-
\sum_{t=-j_1}^{(m-1)j_1-1}
            q_{j_1,t}
            x_\ell^{t}\Big)\\
&=\sum_{\ell=1}^r\rho_\ell x_\ell^{i_1}x_\ell^{(m-1)j_1}
=\sum_{\ell=1}^r\rho_\ell x_\ell^{(m-1)j_1+i_1}
=\sum_{\ell=1}^r\rho_\ell x_\ell^{s_1},
\end{align*}
where we used \eqref{def-gamma-s} in the first equality, induction hypothesis and the definition of free moments in the second,
rearranged the terms in the third, \eqref{relation-1} in the fourth and definition of $(i_1,j_1)$ in the last.

If $s_1<0$, then $i_1=0$
and
\begin{align*}
\gamma_{s_1}
&=\frac{1}{q_0^{j_1}}\Big(\beta_{0,j_1}-\sum_{t=-j_1+1}^{(m-1)j_1}
            q_{j_1,t}
            \gamma_{t}\Big)\\
&=\frac{1}{q_0^{j_1}}\Big[ \sum_{\ell=1}^r\rho_\ell y_\ell^{j_1}-
\sum_{t=-j_1+1}^{(m-1)j_1}
            q_{j_1,t}
            \Big(\sum_{\ell=1}^r \rho_\ell x_\ell^{t}\Big)\Big]\\
&=\sum_{\ell=1}^r\rho_\ell 
\frac{1}{q_0^{j_1}}\Big(y_\ell^{j_1}-
\sum_{t=-j_1+1}^{(m-1)j_1}
            q_{j_1,t}
            x_\ell^{t}\Big)\\
&=\sum_{\ell=1}^r\rho_\ell x_\ell^{-j_1},
\end{align*}
where we used \eqref{def-gamma-s} in the first equality, induction hypothesis and the definition of free moments in the second,
rearranged the terms in the third and \eqref{relation-1} in the last. This proves the backward implication of Claim and concludes the proof of the equivalence $(i)\Leftrightarrow (ii)$ of the theorem. 
\end{proof}


\section{Concrete solution to the TMP on $p(x,y)=xy-x^3-\sum_{i=0}^2 q_ix^i$, $q_0\neq 0$}
\label{sec:m=3}

In this section, we derive explicit numerical conditions for the existence of
free moments in Theorem \ref{r-main1} above for $m=3$, solving the TMP concretely. 
Let us see why $q_0\neq 0$ is given; if not, the TMP would involve a reducible column dependency, which could already be solved using known results. 
We also show that the Carath\'eodory number of the moment sequence of degree $2n$ is $3n$, which represents the minimum number of atoms needed to achieve a representing measure.  Moreover, if a $\calZ(p)$--representing measure exists, then it is $(\rank M(n))$--atomic.
The main results are 
Theorem \ref{r-main3}, which is the solution to $p$-pure cases, 
and Theorem \ref{r-main3-singular}, which solves singular cases.\\

Assume the notation from Section \ref{general-case} and
let $\gamma\equiv \gamma^{(-2n,4n)}$ be defined by \eqref{def-gamma-s}
for $m=3$. We have (see \eqref{def-h})
$$h(k)
=\max\{2n+k-1,2k\}+1
=
\left\{
    \begin{array}{lr}
        2n+k,&  \text{if }k<2n,\\
        4n+1,&  \text{if }k=2n.
    \end{array}
\right.
$$
Hence, $\cI=\{-2n,-2n+1,\ldots,4n-1\}$
and the only free moment is $\mathbf{\gamma_{4n}}$.
We define $\gamma_s$ in the order
\begin{align}
    \label{ordering-m=3}
    \begin{split}
        &
        \underbrace{\gamma_0,\gamma_1,\ldots,\gamma_{2n}}_{j=0},
        \underbrace{\gamma_{-1},\gamma_{2n+1}}_{j=1},\ldots,
        \underbrace{\gamma_{-k},\gamma_{2n+k}}_{j=k},\ldots,
        \underbrace{\gamma_{-2n+1},\gamma_{4n-1}}_{j=2n-1},
        \underbrace{\gamma_{-2n}}_{j=2n}
    \end{split}
\end{align}
by \eqref{def-gamma-s}.
The only auxiliary moment is $\gamma_{-2n}$. Except $\mathbf{\gamma_{4n}}$ and $\gamma_{-2n}$ all the other moments are fully--determined.
Namely, we may rewrite moments for $j=0$, 
\begin{align*}
\gamma_{0}=\beta_{0,0},\ \ \gamma_1=\beta_{1,0}, \ \ \ldots, \ \ 
    \gamma_{2n}=\beta_{2n,0};
\end{align*}
for $j=1$, 
\begin{align*} 
\gamma_{-1}&=\frac{1}{q_0}
    \Big(\beta_{0,1}-\sum_{t=0}^{2} q_{1,t}\gamma_t\Big), \\
\gamma_{2n+1}&=
    \beta_{2n-1,1}-\sum_{t=-1}^{1} q_{1,t}\gamma_{t+2n-1};\\
&\ \vdots\\
\end{align*}
for $j=k$, 
\begin{align*}
\gamma_{-k}&=\frac{1}{q_0^k}
    \Big(\beta_{0,k}-\sum_{t=-k+1}^{2k} q_{k,t}\gamma_t\Big), \\
\gamma_{2n+k}&=
    \beta_{2n-k,k}-\sum_{t=-k}^{2k-1} q_{k,t}\gamma_{t+2n-k};\\
&\ \vdots\\   
\end{align*}
for $j=2n-1$, 
\begin{align*}
\gamma_{-2n+1}&=\frac{1}{q_0^{2n-1}}
    \Big(\beta_{0,2n-1}-\sum_{t=-2n+2}^{4n-2} q_{2n-1,t}\gamma_t\Big), \\
\gamma_{4n-1}&=
    \beta_{1,2n-1}-\sum_{t=-2n+1}^{4n-3} q_{2n-1,t}\gamma_{t};    \end{align*}
for $j=2n$, 
\begin{align*}
\gamma_{-2n}(\mathbf{\gamma_{4n}})&=\frac{1}{q_0^{2n}}
    \Big(\beta_{0,2n}-\sum_{t=-2n+1}^{4n-1} q_{2n,t}\gamma_t-\mathbf{\gamma_{4n}}\Big)=:D-q_0^{-2n}\mathbf{\gamma_{4n}}.
\end{align*}
We introduce a new variable $\mathbf{t}$ for $\mathbf{\gamma_{4n}}$
and write
\begin{small}
\begin{equation*}
    A_{\gamma(\mathbf{t})}
    =
    \kbordermatrix{
                & T^{-n} & T^{-n+1} & \cdots & T^{-1} & \textit{1} & T & \cdots &T^{2n}\\
    T^{-n} & 
        \gamma_{-2n}(\mathbf{t}) & \gamma_{-2n+1} & \cdots &\gamma_{-n+1} & \gamma_{-n} &\gamma_{-n+1} & \cdots & \gamma_{n}\\
    T^{-n+1} & 
        \gamma_{-2n+1} & \gamma_{-2n+2} & \cdots &\gamma_{-n+2} & \gamma_{-n+1} &\gamma_{-n+2} & \cdots & \gamma_{n+1}\\
    \vdots & \vdots & \vdots & &\vdots&\vdots&\vdots&&\vdots\\
    T^{-1} & 
        \gamma_{-n-1} & \gamma_{-n} & \cdots &\gamma_{-2} & \gamma_{-1} &\gamma_{0} & \cdots & \gamma_{2n-1}\\
    \textit{1} & 
        \gamma_{-n} & \gamma_{-n+1} & \cdots &\gamma_{-1} & \gamma_{0} &\gamma_{1} & \cdots & \gamma_{2n}\\
    T & 
        \gamma_{-n+1} & \gamma_{-n+2} & \cdots &\gamma_{0} & \gamma_{1} &\gamma_{2} & \cdots & \gamma_{2n+1}\\
     \vdots & \vdots & \vdots & &\vdots&\vdots&\vdots&&\vdots\\
    T^{2n} & 
        \gamma_{n} & \gamma_{n+1} & \cdots &\gamma_{2n-1} & \gamma_{2n} &\gamma_{2n+1} & \cdots & \mathbf{t}
    }
\end{equation*}
\end{small}
\noindent for the corresponding Hankel matrix.
For $i\leq j$ we write 
    $$\vec{T}^{(i,j)}
    :=
    \begin{pmatrix}
        T^{i} & T^{i+1} & \cdots & T^{j}
    \end{pmatrix}.
    $$
Now the matrix $A_{\gamma(\mathbf{t})}$ has the form
\begin{equation*}
A_{\gamma(\mathbf{t})}
=
\kbordermatrix{
            &T^{-n} & \vec{T}^{(-n+1,2n-1)} & T^{2n}\\
T^{-n} &    D-q_0^{-2n}\mathbf{t} & b^T & \gamma_{n} \\[0.3em]
(\vec{T}^{(-n+1,2n-1)})^T &
            b & A_{\widetilde \gamma} & c \\[0.3em]
T^{2n} &
        \gamma_n & c^T & \mathbf{t}
},
\end{equation*}
where
\begin{align*}
    b^T
    &=\begin{pmatrix}
            \gamma_{-2n+1} & \cdots & \gamma_{-n-1} & \gamma_{-n} & \gamma_{-n+1} & \cdots & \gamma_{2n-1}
        \end{pmatrix},\\
    c^T
    &=\begin{pmatrix}
            \gamma_{n+1} & \cdots & \gamma_{2n-1} & \gamma_{2n} & \gamma_{2n+1} & \cdots & \gamma_{4n-1}
        \end{pmatrix},\\
    \widetilde \gamma
    &=
    (\gamma_{-2n+2},\gamma_{-2n+3},\ldots,\gamma_{4n-2}).
\end{align*}

The following theorem is a solution to the $p$-pure TMP.

\begin{theorem}[Pure case]\label{r-main3} 
Let $ p(x,y)= xy -\sum_{s=0}^3 q_sx^s$ with $q_i\in\re$, $q_0\neq 0$ $q_3=1$.
    Given a $p$-pure sequence \( \beta \equiv \q \beta_{i,j}^{(2n)} \w \) for $i,j\in \ZZ_+$, $i+ j \le 2n$,
 let  
    $$
    \gamma(\mathbf{t})= 
        (\gamma_{-2n}(\mathbf{t}), \gamma_{-2n+1}, \ldots,
        \gamma_{-1},\gamma_{0},\gamma_{1},\ldots,\gamma_{4n-1},\mathbf{t})
    $$
 be defined by the procedure above.
Assume the notation above. 
Let us define 
\begin{align}
\label{r-main-3-def}
\begin{split}
    t_{\min}&:=c^T A_{\widetilde \gamma}^{-1} c, \\
    t_{\max}&:=q_0^{2n}(D-b^T A_{\widetilde \gamma}^{-1} b),\\
    E&:=b^T A^{-1}_{\widetilde \gamma}cc^T A^{-1}_{\widetilde\gamma} b 
    -2b^TA^{-1}_{\widetilde \gamma}c \gamma_n+\gamma_n^2.
\end{split}
\end{align}
Then the following are equivalent: 
\begin{enumerate} 
\item[(i)] $\beta$    admits a representing measure. 
\item[(ii)] $\beta$    admits a ${(3n)}$--atomic representing measure. 
\item[(iii)] 
        $t_{\min} < t_{\max}$ 
    and
    $    (t_{\max}-t_{\min})^2
        \geq 
        4q_0^{2n}E.$
\end{enumerate}
\end{theorem}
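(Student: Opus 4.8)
The plan is to feed the explicit block form of $A_{\gamma(\mathbf t)}$ into the equivalence $(i)\Leftrightarrow(iii)$ of Theorem~\ref{r-main1} and thereby reduce the abstract Hankel completion problem to an elementary one‑variable feasibility question. The structural observation that drives everything is that the free moment $\mathbf t$ enters $A_{\gamma(\mathbf t)}$ only through its two corners, namely the $(T^{-n},T^{-n})$ entry $\gamma_{-2n}(\mathbf t)=D-q_0^{-2n}\mathbf t$ and the $(T^{2n},T^{2n})$ entry $\gamma_{4n}=\mathbf t$; the inner block $A_{\widetilde\gamma}$, the borders $b,c$, and the cross‑entry $\gamma_n$ are all independent of $\mathbf t$. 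Since $\beta$ is $p$‑pure, I would first record that $A_{\widetilde\gamma}\succ 0$, which is exactly what lets me use $A_{\widetilde\gamma}$ as a pivot in the Schur‑complement criterion of Theorem~\ref{r-smu}. Note also that $(ii)\Rightarrow(i)$ is trivial, so the real content is $(i)\Leftrightarrow(iii)$ together with the construction of a $3n$‑atomic measure.

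First I would compute the $2\times 2$ Schur complement of $A_{\widetilde\gamma}$ in $A_{\gamma(\mathbf t)}$, which comes out to
\[
S(\mathbf t)=\begin{pmatrix} q_0^{-2n}(t_{\max}-\mathbf t) & \gamma_n-b^{T}A_{\widetilde\gamma}^{-1}c\\[0.3em] \gamma_n-b^{T}A_{\widetilde\gamma}^{-1}c & \mathbf t-t_{\min}\end{pmatrix},
\]
using the identities $D-b^{T}A_{\widetilde\gamma}^{-1}b=q_0^{-2n}t_{\max}$ and $c^{T}A_{\widetilde\gamma}^{-1}c=t_{\min}$. The off‑diagonal entry squared is precisely $E=(\gamma_n-b^{T}A_{\widetilde\gamma}^{-1}c)^2$, so in particular $E\geq 0$ for free. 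By Theorem~\ref{r-smu}, $A_{\gamma(\mathbf t)}\succeq 0$ iff $S(\mathbf t)\succeq 0$, i.e. $t_{\min}\leq\mathbf t\leq t_{\max}$ and $(t_{\max}-\mathbf t)(\mathbf t-t_{\min})\geq q_0^{2n}E$ (here $q_0^{2n}>0$, as $2n$ is even). Since $\mathbf t\mapsto(t_{\max}-\mathbf t)(\mathbf t-t_{\min})$ attains its maximum $\tfrac14(t_{\max}-t_{\min})^2$ at the midpoint, a $\mathbf t$ making $A_{\gamma(\mathbf t)}\succeq 0$ exists iff $t_{\min}\leq t_{\max}$ and $(t_{\max}-t_{\min})^2\geq 4q_0^{2n}E$.

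To recover condition $(iii)$ exactly, in particular the strict inequality $t_{\min}<t_{\max}$, I would next analyse the rank hypotheses in Theorem~\ref{r-main1}(iii). With the same pivot, deleting the last (resp. first) row and column yields the $3n\times 3n$ Hankel matrix $A_{\gamma^{(-2n,4n-2)}}$ (resp. $A_{\gamma^{(-2n+2,4n)}}$) whose Schur complement over $A_{\widetilde\gamma}$ is the scalar $q_0^{-2n}(t_{\max}-\mathbf t)$ (resp. $\mathbf t-t_{\min}$); hence $\rank A_{\gamma^{(-2n,4n-2)}}$ equals $3n$ if $\mathbf t<t_{\max}$ and $3n-1$ if $\mathbf t=t_{\max}$, and symmetrically for $A_{\gamma^{(-2n+2,4n)}}$ at $t_{\min}$, while $\rank A_{\gamma(\mathbf t)}=3n-1+\rank S(\mathbf t)$. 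Comparing these shows that at an endpoint $\mathbf t\in\{t_{\min},t_{\max}\}$ the two $3n\times 3n$ ranks are unequal, so Theorem~\ref{r-main1}(iii) fails there; admissible $\mathbf t$ therefore lie in the \emph{open} interval $(t_{\min},t_{\max})$, which is the source of the strict inequality. Combining this with the feasibility computation of the previous paragraph gives $(i)\Leftrightarrow(iii)$.

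For the atom count $(ii)$, given $(iii)$ I would select an interior $\mathbf t_0$ with $\det S(\mathbf t_0)=0$; this is possible because $(t_{\max}-t_{\min})^2\geq 4q_0^{2n}E$ forces the level $q_0^{2n}E$ to be attained inside $(t_{\min},t_{\max})$. At such a $\mathbf t_0$ one has $\rank S(\mathbf t_0)=1$, so $\rank A_{\gamma(\mathbf t_0)}=3n$ and all three ranks in Theorem~\ref{r-main1}(iii)(b) equal $3n$; thus $\gamma(\mathbf t_0)$ has a $3n$‑atomic $\RR$‑rm vanishing at $0$, and the Claim in the proof of Theorem~\ref{r-main1} transfers it to a $3n$‑atomic $\calZ(p)$‑rm for $\beta$, which $\rank M(n)=3n$ forces to be minimal. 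The step I expect to be most delicate is exactly this last matching: one must use $p$‑purity to exclude the degenerate configuration in which $E=0$ while an interior solution exists, for there $\det S$ stays strictly positive on $(t_{\min},t_{\max})$, $A_{\gamma(\mathbf t)}$ can only be made positive definite, and the minimal measure would have $3n+1$ atoms rather than $3n$. Establishing that purity both guarantees $A_{\widetilde\gamma}\succ 0$ and rules out this non‑flat case (equivalently, forces a genuinely rank‑$3n$ choice of $\mathbf t_0$ whenever a measure exists), together with the boundary rank bookkeeping above, is where the careful work lies.
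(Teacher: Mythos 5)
Your reduction follows the paper's strategy: both arguments pivot on $A_{\widetilde\gamma}\succ 0$ and convert positive semidefiniteness of $A_{\gamma(\mathbf t)}$ into the window $t_{\min}\le \mathbf t\le t_{\max}$ together with the quadratic inequality $q_0^{-2n}(t_{\max}-\mathbf t)(\mathbf t-t_{\min})\ge E$. The paper does this via three nested one\nobreakdash-dimensional Schur complements (Claims 1--3; its $Q(w)$ is exactly your $\det S(\mathbf t)$ with $w=\mathbf t-t_{\min}$), whereas you package it as a single $2\times 2$ Schur complement, which has the minor advantage of making $E=(\gamma_n-b^TA_{\widetilde\gamma}^{-1}c)^2\ge 0$ visible at a glance. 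However, two steps do not go through as written. First, your derivation of the strict inequality $t_{\min}<t_{\max}$ rests on the claim that at an endpoint the ranks of $A_{\gamma^{(-2n,4n-2)}}$ and $A_{\gamma^{(-2n+2,4n)}}$ disagree. That is true when $t_{\min}<t_{\max}$, but in the case you must actually exclude, namely $t_{\min}=t_{\max}$, both ranks drop to $3n-1$ simultaneously at the common endpoint; feasibility there forces $E=0$, hence $S=0$ and $\rank A_{\gamma(\mathbf t)}=3n-1$ as well, so condition (iii)(b) of Theorem \ref{r-main1} is \emph{satisfied} and no contradiction appears. The paper closes this case by a different mechanism: the three equal ranks yield a $(3n-1)$-atomic representing measure for $\gamma$, hence for $\beta$, contradicting $\rank M(n)=3n\le\card\supp\mu$. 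You need to import that variety-condition argument; the pure rank bookkeeping does not suffice.

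Second, you correctly flag the configuration $E=0$, $t_{\min}<t_{\max}$, where $\det S$ vanishes only at the endpoints, every admissible $\mathbf t$ gives $A_{\gamma(\mathbf t)}\succ 0$, and the flat rank-$3n$ choice of $\mathbf t_0$ is unavailable, putting $(iii)\Rightarrow(ii)$ at risk. But you leave its exclusion as a hope that purity does the job, which is not a proof. (You have in fact located a point the paper's own proof glosses over: its assertion that $Q$ has a zero in the open interval $(0,t_{\max}-t_{\min})$ tacitly assumes $E>0$.) A complete write-up must either prove $E>0$ for $p$-pure $\beta$, or handle the branch $E=0$ directly --- for instance by observing that $E=0$ means the column relation $T^{2n}=\vec T^{(-n+1,2n-1)}A_{\widetilde\gamma}^{-1}c$ extends to the row $T^{-n}$ and tracing this back to a column relation of $M(n)$ not recursively generated by $p$. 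Until these two cases are closed, the proposal reproduces the paper's approach but is not yet a proof.
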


The following theorem is a solution to the singular $\calZ(p)$--TMP with a finite algebraic variety. 

\begin{theorem}[Singular case]\label{r-main3-singular} 
Let $ p(x,y)= xy -\sum_{s=0}^3 q_sx^s$ with $q_i\in\re$, $q_0\neq 0$ $q_3=1$.
    Given a sequence \( \beta \equiv \q \beta_{i,j}^{(2n)} \w \) for $i,j\in \ZZ_+$, $i+ j \le 2n$,
    with $\rank M(n)<3n$,
 let  
    $$
    \gamma(\mathbf{t})= 
        (\gamma_{-2n}(\mathbf{t}), \gamma_{-2n+1}, \ldots,
        \gamma_{-1},\gamma_{0},\gamma_{1},\ldots,\gamma_{4n-1},\mathbf{t})
    $$
 be defined by the procedure above.
Assume the notation above. 
Then the following are equivalent: 
\begin{enumerate} 
\item[(i)] $\beta$    admits a representing measure. 
\item[(ii)] $\beta$    admits a $(\rank M(n))$--atomic representing measure. 
\item[(iii)] 
    $c=A_{\widetilde\gamma}w$ for some $w\in \RR^{3k-1}$
    and
    $\rank A_{\gamma(t_0)}=\rank A_{\widetilde\gamma}$ for 
    $t_{0}:=c^T A_{\widetilde \gamma}^{+}c$,
    where $A_{\widetilde \gamma}^{+}$ stands for the Moore-Penrose inverse of $A_{\widetilde\gamma}$.
\end{enumerate}
\end{theorem}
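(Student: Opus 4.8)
The plan is to read off both the qualitative equivalence and the numerical conditions from Theorem \ref{r-main1}, passing to the associated univariate strong Hamburger data and then exploiting the Vandermonde structure of one-dimensional Hankel matrices together with the rank clause of Theorem \ref{r-smu}. The implication $(ii)\Rightarrow(i)$ is immediate, and for the converse directions I will use the fact recorded in this section that any $\cZ(p)$--representing measure is necessarily $(\rank M(n))$--atomic, so that in the present setting the number of atoms $r$ of a minimal measure satisfies $r=\rank M(n)\le 3n-1$.

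For $(i)\Rightarrow(iii)$ I would first invoke the equivalence $(i)\Leftrightarrow(ii)$ of Theorem \ref{r-main1} to produce an $\RR$--representing measure $\mu_x=\sum_{\ell=1}^{r}\rho_\ell\delta_{x_\ell}$ of $\gamma$ with all $x_\ell\neq 0$, for a suitable value of the free moment; set $\mathbf t:=\int x^{4n}\,d\mu_x$, so that $A_{\gamma(\mathbf t)}$ is exactly the (psd) Hankel matrix of $\mu_x$ and $\rank A_{\gamma(\mathbf t)}=r$. The key local observation is that the central block $A_{\widetilde\gamma}$ is the $(3n-1)\times(3n-1)$ Hankel matrix of $\mu_x$ built from the consecutive powers $x^{-n+1},\dots,x^{2n-1}$; factoring out $x_\ell^{-n+1}$ this is a genuine Vandermonde system in the $r$ distinct nonzero nodes $x_\ell$, whence $\rank A_{\widetilde\gamma}=\min(r,3n-1)=r$ precisely because the singular hypothesis gives $r\le 3n-1$. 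Since the bordered block $\left(\begin{smallmatrix}A_{\widetilde\gamma}&c\\ c^{T}&\mathbf t\end{smallmatrix}\right)=A_{\gamma^{(-2n+2,4n)}}$ is psd, Theorem \ref{r-smu} yields $c\in\ran A_{\widetilde\gamma}$, i.e. $c=A_{\widetilde\gamma}w$; and because $\rank A_{\gamma(\mathbf t)}=r=\rank A_{\widetilde\gamma}$, while $A_{\widetilde\gamma}$ is a principal submatrix of $A_{\gamma^{(-2n+2,4n)}}$ and the latter of $A_{\gamma(\mathbf t)}$, the rank clause of Theorem \ref{r-smu} forces $\mathbf t=c^{T}A_{\widetilde\gamma}^{+}c=t_0$ and $\rank A_{\gamma(t_0)}=\rank A_{\widetilde\gamma}$. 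This is exactly (iii).

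For $(iii)\Rightarrow(ii)$ I would run the same identities in reverse. Positive semidefiniteness of $M(n)$ is necessary for a representing measure, and through the dictionary $f$ and the relations \eqref{relation-1}--\eqref{lowest-term} it transfers to $A_{\widetilde\gamma}\succeq 0$. Given (iii), the rank clause of Theorem \ref{r-smu}, now used as a sufficient condition, shows that $\rank A_{\gamma(t_0)}=\rank A_{\widetilde\gamma}$ together with $A_{\widetilde\gamma}\succeq 0$ forces both borders into $\ran A_{\widetilde\gamma}$ and makes $A_{\gamma(t_0)}\succeq 0$. Moreover, since $A_{\widetilde\gamma}$ is a principal submatrix of each of $A_{\gamma^{(-2n,4n-2)}}(t_0)$ and $A_{\gamma^{(-2n+2,4n)}}(t_0)$, which are in turn principal submatrices of $A_{\gamma(t_0)}$, the two intermediate ranks are squeezed between $\rank A_{\widetilde\gamma}$ and $\rank A_{\gamma(t_0)}=\rank A_{\widetilde\gamma}$, so the rank condition in Theorem \ref{r-main1}(iii)(b) holds at $\mathbf t=t_0$. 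Theorem \ref{r-main1} then delivers a $\cZ(p)$--representing measure, and since it is $(\rank M(n))$--atomic with $\rank M(n)=\rank A_{\gamma(t_0)}$, we obtain the sharp count in (ii).

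The step I expect to be the main obstacle is the bookkeeping that anchors the Hankel ranks to $\rank M(n)$: verifying carefully that the minimal atom count equals $\rank M(n)$, and that $M(n)\succeq 0$ is equivalent to positive semidefiniteness of the relevant compressions of $A_{\gamma(\mathbf t)}$, both of which require tracking the recursively generated column relations of $M(n)$ through the reindexing $f$ and the substitutions \eqref{relation-1}--\eqref{lowest-term}. The singular hypothesis $\rank M(n)<3n$ is used essentially once, to guarantee $r\le 3n-1$ so that the central Vandermonde block $A_{\widetilde\gamma}$ already has full rank $r$; this is what pins the free moment to the single value $t_0$ and excludes the positive definite alternative of Theorem \ref{r-main1}(iii). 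Once these rank identifications are in place, the remaining Schur-complement computations are routine.
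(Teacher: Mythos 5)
Your reduction to the univariate Hankel picture via Theorem \ref{r-main1} and the Schur-complement/rank bookkeeping with Theorem \ref{r-smu} are in the right spirit, but there is a genuine gap at the point you yourself flag as ``the main obstacle'' and defer as bookkeeping: the identification of the minimal atom count $r$ with $\rank M(n)$, and in particular the bound $r\le 3n-1$. You obtain it by quoting the sentence from the start of Section \ref{sec:m=3} asserting that any $\cZ(p)$--representing measure is $(\rank M(n))$--atomic, but that sentence is an announcement of what Theorems \ref{r-main3} and \ref{r-main3-singular} prove, not a prior fact --- using it here is circular. Without it, your Vandermonde argument only gives $\rank A_{\widetilde\gamma}=\min(r,3n-1)$, which could equal $3n-1$ with $A_{\widetilde\gamma}$ nonsingular and $A_{\gamma(\mathbf t)}$ of strictly larger rank (even positive definite), in which case $\mathbf t$ is not pinned to $t_0$ and (iii) fails; so $(i)\Rightarrow(iii)$ is not closed. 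Symmetrically, in $(iii)\Rightarrow(ii)$ the final claim that the measure produced is $(\rank M(n))$--atomic, rather than $(\rank A_{\gamma(t_0)})$--atomic, is asserted without linking the two ranks.

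The missing content is exactly what the paper's proof consists of: translating the hypothesis $\rank M(n)<3n$ --- i.e.\ the existence of a column relation $\sum\alpha_{i,j}X^iY^j=\mathbf 0$ not recursively generated by $p$ --- into a column relation among the $T^\ell$ of the Hankel matrix, using the substitution $X^iY^j\mapsto\sum_{t=-j}^{2j}q_{j,t}T^{t+i}$. One must then split into cases according to whether the relation can be chosen with $\alpha_{0,n}=0$ (so the induced relation lives on the columns $T^{-n+1},\dots,T^{2n-1}$ of $A_{\widetilde\gamma}$, making $A_{\widetilde\gamma}$ singular) or every such relation involves $Y^n$ nontrivially (so one first eliminates $Y^n$ against its own expansion to land on $T^{-n},\dots,T^{2n-1}$, and then uses recursive generation to force a relation involving $T^{2n}$). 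Only after this does \cite[Theorem 3.1]{Zal22j} force $\rank A_{\gamma(t)}=\rank A_{\widetilde\gamma}$, hence $c\in\operatorname{range}A_{\widetilde\gamma}$ and $t=t_0$, and only then does the atom count come out as claimed. This case analysis is where the singularity hypothesis is actually consumed; it is not routine and your proposal does not supply it. (A secondary, smaller point, shared with the paper's own formulation: in $(iii)\Rightarrow(ii)$ you invoke positive semidefiniteness of $M(n)$ ``as necessary for a representing measure,'' but in that direction no measure is yet available, so $M(n)\succeq 0$, or equivalently $A_{\widetilde\gamma}\succeq 0$, must be taken as a standing hypothesis rather than derived.)
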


\begin{proof}[Proof of Theorem \ref{r-main3}]
Before we prove the equivalences of the theorem, we derive a few claims.
Let us denote by $(A_{\gamma(\mathbf{t})})|_{\vec{T}^{(i,j)}}$ the restriction of $A_{\gamma(\mathbf{t})}$
to a principal submatrix on rows and columns labelled by elements from 
$\vec{T}^{(i,j)}$.
\\

\noindent
\textbf{Claim 1.}
$
A_1:=(A_{\gamma(t)})|_{\vec{T}^{(-n,2n-1)}}\succeq 0
\; \iff\;
t\leq t_{\max}.
$\\

\noindent\textit{Proof of Claim 1.}
We see that 
\begin{equation*}
A_1
=
\kbordermatrix{
            &T^{-n} & \vec{T}^{(-n+1,2n-1)} \\
T^{-n} &    D-q_0^{-2n}t& b^T\\[0.3em]
(\vec{T}^{(-n+1,2n-1)})^T &
            b & A_{\widetilde \gamma}
}.
\end{equation*}
Since $A_{\widetilde \gamma}$ is positive definite, 
Theorem \ref{r-smu} implies the following:
\begin{equation*}
    A_1\succeq 0 
    \quad\iff\quad
    D-q_0^{-2n}t\geq b^T A_{\widetilde \gamma}^{-1} b
    \quad\iff\quad
    t\leq t_{\max},
\end{equation*}
which proves Claim 1. \hfill$\square$\\

\noindent
\textbf{Claim 2.}
$
A_2:=(A_{\gamma(t)})|_{\vec{T}^{(-n+1,2n)}}
\succeq 0
\;
\iff
\;
t_{\min}\leq t.\\
$

\noindent\textit{Proof of Claim 2.}
We see that
\begin{equation*}
A_2
=
\kbordermatrix{
            &\vec{T}^{(-n+1,2n-1)} & T^{2n} \\
(\vec{T}^{(-n+1,2n-1)})^T &  A_{\widetilde\gamma} & c\\[0.3em]
T^{2n} &
            c^T & t
}.
\end{equation*}
Since $A_{\widetilde \gamma}$ is positive definite, Theorem \ref{r-smu} implies the following:
\begin{equation*}
    A_2\succeq 0 
    \quad \iff \quad 
    t \geq c^T A_{\widetilde \gamma}^{-1} c=t_{\min},
\end{equation*}
which proves Claim 2. \hfill$\square$\\

\noindent
\textbf{Claim 3.}
Let 
$\mathbf{t}=t_{\min}+\mathbf{w}$
for $\mathbf{w}>0$
and
\begin{equation}
    \label{def-Q-v2}
    Q(\boldw)
    :=
    -\frac{\boldw^2}{q_0^{2n}}
    +\frac{t_{\max}-t_{\min}}{q_0^{2n}}\boldw
    -
    E
\end{equation}
    be a quadratic polynomial.
Then
\begin{align}
\label{psd-cond-1}
    A_{\gamma(t)}\succeq 0
    \quad&\iff\quad
    Q(w)\geq 0 \text{ and }w\leq t_{\max}-t_{\min}\\
\label{psd-cond-2}
    \quad&\iff\quad
    (t_{\max}-t_{\min})^2
        \geq 
        4q_0^{2n}E.
\end{align}
\smallskip

\noindent\textit{Proof of Claim 3.}
In particular, for $A_{\gamma(t)} \succeq 0$ we must have $A_1\succeq 0$ and $A_2\succeq 0$. By Claim 1, it follows that $t\leq t_{\max}$
which is equivalent to $w\leq t_{\max}-t_{\min}$.
Since $t>t_{\min}$, we know that $A_2\succ 0$ and then by 
\cite[Formula (0.7.2)]{Zha05} we have 
$$
A_2^{-1}
=
\begin{pmatrix}
    A^{-1}_{\widetilde \gamma} & 0 \\ 
    0 & 0
\end{pmatrix}
+
\frac{1}{w}
\begin{pmatrix}
    A^{-1}_{\widetilde \gamma}cc^tA^{-1}_{\widetilde\gamma} & 
        -A^{-1}_{\widetilde \gamma}c\\ 
    -c^TA^{-1}_{\widetilde \gamma}& 1
\end{pmatrix}.
$$
Using Theorem \ref{r-smu},  we see that 
\begin{align*}
\begin{split}
    &A_{\gamma}\succeq 0 \\
    &\iff 
    D-\frac{t_{\min}+{w}}{q_0^{2n}}\geq 
    \begin{pmatrix}
    b^T & \gamma_{n}    
    \end{pmatrix} 
    A_{2}^{-1} 
    \begin{pmatrix}
    b \\ \gamma_{n}    
    \end{pmatrix}     
    \\
    &\iff 
    -\frac{w}{q_0^{2n}}
    +\big(D-b^T A_{\widetilde \gamma}^{-1}b-\frac{t_{\min}}{q_0^{2n}}\big)
    -
    \frac{1}{w}
    (b^T A^{-1}_{\widetilde \gamma}cc^tA^{-1}_{\widetilde\gamma} b 
    -2b^TA^{-1}_{\widetilde \gamma}c \gamma_n+\gamma_n^2
    )\geq 0\\
    &\iff 
    Q(w)
    \geq 0,
\end{split} 
\end{align*}
where the last equivalence follows after multiplying by $w$ (which is positive)
and the definition of $Q$. This proves the equivalence in \eqref{psd-cond-1}.
Since 
$Q(w_0)
=
\frac{(t_{\max}-t_{\min})^2}{4q_0^{-2n}}-E$
is a maximum of $Q$
attained in 
$w_0
=\frac{t_{\max}-t_{\min}}{2}$,
this gives the equivalence
\eqref{psd-cond-2}.
\hfill$\square$\\

Let us now prove $(i)\Rightarrow (iii)$. Since $\beta$ has 
a representing measure, then by Theorem \ref{r-main1} there exists $\gamma_{4n}$
such that $\gamma$ has a representing measure. In particular, this means there is $t\in \RR$, such that $A_{\gamma}\succeq 0$.  
By Claims 1 and 2, this  particularly implies that
$t_{\min}\leq t_{\max}$ holds. Next let us show that the inequality is strict.
Assume on the contrary that $t_{\min}=t_{\max}$. This means that 
$\gamma_{4k}$ must be precisely $t_{\min}=t_{\max}$.
Hence, the first and the last column of $A_{\gamma}$ are in the span of the intermediate ones and $\rank A_\gamma=3n-1$, whence 
$\gamma$ has a $(3n-1)$--atomic representing
measure by \cite[Theorem 3.1]{Zal22j}. But then $\beta$ also admits a $(3n-1)$--atomic representing measure, which is a contradiction with $\rank M(n)=3n$, because $\beta$ is $p$-pure. This proves that $t_{\min}<t_{\max}$ holds.
By Claim 3, 
also the second inequality in $(iii)$ holds.\\

Next we prove $(iii)\Rightarrow (ii)$.
To prove that $\beta$ admits a representing measure containing 
    $\rank M(n)=3n$
atoms, we have to show by \cite[Theorem 3.1]{Zal22j} 
there exists a choice of $t$ such that 
\begin{equation}
\label{rank-cond}
    A_{\gamma(t)}\succeq 0
    \quad\text{and}\quad
    3n=\rank A_{\gamma(t)}=\rank A_1=\rank A_2.
\end{equation}
By Claims 1 and 2 above, 
$t\in [t_{\min},t_{\max}]$. If $t$ is equal to one of $t_{\min}$ or $t_{\max}$,
then \eqref{rank-cond} cannot hold due to singularity of $A_1$ or $A_2$, and so 
$t\in (t_{\min},t_{\max})$. 
By assumption in $(iii)$ and the equivalences in Claim 3, there exists $w\in (0,t_{\max}-t_{\min})$ such that
$Q(w)=0$ with $Q$ as in 
\eqref{def-Q-v2}.
For this $w$ we see that  $A_{\gamma}\succeq 0$ and $\rank A_{\gamma}=3n$.
Since $t:=t_{\min}+w\in (t_{\min},t_{\max})$
also the other two rank conditions in \eqref{rank-cond} hold. This concludes the proof of 
$(iii)\Rightarrow (ii)$.\\

Finally, $(ii)\Rightarrow (i)$ is trivial. 
\end{proof}

\begin{proof}[Proof of Theorem \ref{r-main3-singular}]
Since $\rank M(n)<3n$, there must be another column relation not recursively generated by  
    $XY=X^3+q_2X^2+q_1X+q_0$. 
Each additional relation is of the form
\begin{equation}
\label{additional-relation}
    \sum_{\substack{i,j\in \ZZ_+,\\ i+j\leq n}} \alpha_{i,j}X^iY^j=\mathbf{0},
    \quad
    \alpha_{i,j}\in \RR.
\end{equation}
We distinguish between two cases based on additional relations.\\

\noindent 
\textbf{Case 1:} \textit{There exists an additional relation \eqref{additional-relation} with $\alpha_{0,n}=0$.}

The column $X^iY^j$ of $M(n)$ corresponds to the linear combination 
$\sum_{t=-j}^{2j} q_{j,t} T^{t+i}$ of columns $T^\ell$ of $A_{\gamma(\mathbf{t})}$,
where $q_{j,t}$ are as in \eqref{relation-1} for $m=3$.
If $(i,j)\neq (0,n)$, then the exponent $t+i$ can run only from $-n+1$ to $2n-1$.
So the relation \eqref{additional-relation} in Case 1 gives a relation between the columns of $A_{\widetilde \gamma}$. But then \cite[Theorem 3.1]{Zal22j}
implies that $\gamma(t)$ has a $\RR$--rm vanishing in $\{0\}$ for some $t\in \RR$ 
if and only if 
$\rank A_{\widetilde \gamma}=\rank A_{\gamma(t)}$.
In particular, $\rank (A_{\gamma(t)})|_{\vec T^{(-n+1,2n)}}=\rank A_{\widetilde \gamma}$
and by an analogous proof as for Claim 2 in Thereom \ref{r-main3}, 
$t$ must be equal to $c^TA_{\gamma}^\dagger c$ and $c= A_{\widetilde \gamma}w$ for some $w\in \RR^{3n-1}$.
By Theorem \ref{r-main1} and \cite[Theorem 3.1]{Zal22j}, the equivalences of Theorem \ref{r-main3-singular} in this case follow. 
\\

\noindent 
\textbf{Case 2:} \textit{For every additional relation \eqref{additional-relation},  we have $\alpha_{0,n}\neq 0$.}

Using the relation coming form $Y^{n}=\big(\sum_{i=0}^3 X^3+q_2X^2+q_1X+q_0\big)^n$
and the additional relation \eqref{additional-relation} containing $Y^n$ nontrivially,
we get a nontrivial relation among columns $T^\ell$, $\ell=-n,\ldots,2n-1$ of $A_{\gamma(t)}$. But then by \cite[Theorem 3.1]{Zal22j} for the existence of a 
$\RR$--rm vanishing in $\{0\}$ for $\gamma(t)$, $t\in \RR$, there must be a nontrivial relation among columns $T^\ell$, $\ell=-n+1,\ldots,2n$, containing $T^{2n}$ nontrivially (due to rg). This further implies $\rank (A_{\gamma(t)})|_{\vec T^{(-n+1,2n)}}=\rank A_{\widetilde \gamma}$ and by the same arguments as in Case 1, 
the equivalences of Theorem \ref{r-main3-singular} in this case follow.
\end{proof}

\begin{remark}
Recently, the Carath\'eodory number of real plane cubics with smooth projectivization was studied in \cite{BBS24+}  using tools from algebraic geometry. 
The main results show (see \cite[Section 6]{BBS24+}), that the Carath\'eodory number is at most $3n+1$ for degree $2n$ $p$-pure sequences and characterize in terms of the number of connected components of $\cZ(p)$, when it is $3n$. Note that the cubic curve studied in this section does not satisfy projective smoothness assumption and hence the result about the Carath\'eodory number from Theorem \ref{r-main3} does not follow from \cite{BBS24+}.

    Asymptotic estimates for Carath\'eodory number on affine plane curves have been recently studied also in \cite{DK21} and \cite{RS18}.
\end{remark}

The following example demonstrates the solution to the $\calZ(p)$--TMP for $m=4$.

\begin{example}\label{eg-1}
Consider \(\beta \equiv \beta^{(8)}\) with moments generated by the 14-atomic representing measure \(\mu = \sum_{\ell=1}^{14} \rho_\ell \delta_{(x_\ell, y_\ell)}\), where \(\rho_\ell = \frac{1}{14}\), \(x_\ell = \ell\), and \(y_\ell = \frac{(x_\ell + 1)(x_\ell + 2)(x_\ell + 4)}{x_\ell}\) for \(\ell = 1, \ldots, 14\). The moments are given by
\begin{align*}
&\beta_{00}=1, \     \beta_{10}=\frac{15}{2}, \   \beta_{01}= \frac{88829303}{630630}, \ \ldots, \\
&\beta_{80}= \frac{443370241}{2}, \ \ldots , \\
&\beta_{08}=\frac{2248747733666520927131582212659085688086421341014376774177}{237301654241203443784531432580505468750}.    
\end{align*}

\noindent 
Using \textit{Mathematica}, we find the row-reduced form of the moment matrix \( M{(4)}(\beta) \), which shows that it is both positive semidefinite and \(p\)-pure, where \( p(x, y) = xy - x^3 - 7x^2 - 14x + 8 \). The columns $X^3$, $X^4$, and $X^3 Y$ in $\mcal C_{M(4)}$ are linearly dependent, and so $\rank M(4)=12$. Following the procedure form Section \ref{general-case},
we obtain the associated univariate strong moment sequence \(\gamma \equiv \gamma^{(-8, 16)}\) as follows: 
\begin{align*}
\gamma_{-8}&= {\scriptstyle \frac{1400837170807195875714994726099439569487325591594631638817-237301654241203443784531432580505468750  \beta_{2,7}}{3981261110361986276316941303192577638400000000}} , \\
\gamma_{-7}&={\scriptstyle\frac{795732381288691429080031515331406184509}{11048010629265141181920694037053440000000}} , \\
\gamma_{-6}&={\scriptstyle\frac{445570839299219762020391212081493}{6131652030894184250150235340800000}},\\
&\ \vdots \\
\gamma_{14}&={\scriptstyle\frac{2405869901763265}{2}} , \\  
\gamma_{15}&={\scriptstyle\frac{32512083310326375}{2}}, &   \\
\gamma_{16}&={\scriptstyle\frac{2596336578534357052143750 \beta_{2,7}-14754296464684589107824850551429749877576317}{2596336578534357052143750}},
\end{align*}
where $\beta_{2,7}$ is a parameter.

\noindent

A calculation shows that no value of \(\beta_{2,7}\) such that 
$$\rank A_{(\gamma_{-8},\ldots, \gamma_{16})} 
= \rank A_{(\gamma_{-8},\ldots,\gamma_{14})}
= \rank A_{(\gamma_{-6},\ldots, \gamma_{16})}.$$ 
Another possibility for having a representing measure is \(A_{\gamma}\succ 0\), which corresponds to the following approximation: 
\begin{align}\label{e-keyb27}
    5.9031917636064208814 \times 10^{18} < \beta_{2,7} < 5.9031917636066715225 \times 10^{18},
\end{align}

In this case, \(\beta\) supports infinitely many 13-atomic representing measures. Alternatively, if \(A_{\gamma}\succeq 0\), \(\rank A_{\gamma} = 12\) and \(A_{\gamma}\) is recursively generated in both directions, this occurs at the endpoints of the inequality in (\ref{e-keyb27}). In particular, if $\beta_{2,7}\approx 5.9031917636064208814 \times 10^{18}$, then the zeros of the generating function are given by 
\begin{align*}
t_1 &\approx  9.35449 \times 10^{-11},  &t_2 &\approx 1 , &t_3 &\approx 2.00001, \\
t_4 &\approx  3.00159, &t_5 &\approx 4.03688, &t_6 &\approx5.23594, \\
t_7 &\approx  6.70231 , &t_8 &\approx    8.37317 , &t_9 &\approx 10.0875, \\
t_{10} &\approx 11.6566, &t_{11}&\approx  12.9415, &t_{12} &\approx 13.9981.    
\end{align*}
Solving the Vandermonde equation in this case, the densities are 
\begin{align*}
\rho_1 &\approx  -0.000301331,  &\rho_2 &\approx 0.0700711 , &\rho_3 &\approx 0.0747554, \\
\rho_4 &\approx  0.0588108, &\rho_5 &\approx0.0738502, &\rho_6 &\approx 0.0903296, \\
\rho_7 &\approx 0.111516 , &\rho_8 &\approx 0.122776 , &\rho_9 &\approx 0.119394, \\
\rho_{10} &\approx 0.102687, &\rho_{11}&\approx  0.0814315, &\rho_{12} &\approx 0.0720834.    
\end{align*}We have demonstrated that \(\beta\) admits a 12-atomic rm $\sum_{\ell=1}^{12} \rho_\ell \delta_{(t_\ell ,  s_\ell )}$, where 
$$s_\ell = \frac{(t_\ell + 1)(t_\ell + 2)(t_\ell + 4)}{t_\ell},$$ which differs from the initial measure \(\mu\),
used to generate $\beta$.
\end{example}


\section{More concrete solutions to the TMP on $p(x,y)=xy-x^4-q_3x^3-q_2x^2-q_1x-q_0$, $q_0\neq 0$}
\label{sec:m=4}

In this section, we derive more concrete numerical conditions for the existence of
free moments in Theorem \ref{r-main1} to solve the TMP for $m=4$. 
The main results are 
Theorem \ref{r-main4}, which characterizes the existence of a positive definite completion of the corresponding Hankel matrix from Section \ref{general-case} in terms of a system of inequalities, 
while Theorem \ref{r-main4-non-definite} solves the $\calZ(p)$--TMP for the cases
without positive definite completion.\\

Assume the notations introduced in Section \ref{general-case} and
let $\gamma\equiv \gamma^{(-2n,6n)}$ be defined by \eqref{def-gamma-s}
for $m=4$. 
We have (see \eqref{def-h})
$$h(k)
=\max\{2n+2(k-1),3k\}+1
=
\left\{
    \begin{array}{rl}
        2n+2k-1,&  \text{if }k\leq 2n-2,\\
        6n-2,&  \text{if }k=2n-1,\\
        6n+1,&  \text{if }k=2n,
    \end{array}
\right.
$$
Hence, $\cI=\{-2n,-2n+1,\ldots,6n-4,6n-2\}$
and the free moments in $\gamma$ are 
$\mathbf{\gamma_{6n-3}}$, $\mathbf{\gamma_{6n-1}}$, $\mathbf{\gamma_{6n}}$.
We define $\gamma_s$ in the order
\begin{align}
    \label{ordering-m=4}
    \begin{split}
        &
        \underbrace{\gamma_0,\gamma_1,\ldots,\gamma_{2n}}_{j=0},
        \underbrace{\gamma_{-1},\gamma_{2n+1},\gamma_{2n+2}}_{j=1},\ldots,
        \underbrace{\gamma_{-k},\gamma_{2n+2k-1},\gamma_{2n+2k}}_{j=k},\ldots\\
        &\hspace{3cm}
        \ldots,
        \underbrace{\gamma_{-2n+2},\gamma_{6n-5},\gamma_{6n-4}}_{j=2n},
        \underbrace{\gamma_{-2n+1},\gamma_{6n-2}}_{j=2n-1},
        \underbrace{\gamma_{-2n}}_{j=2n}
    \end{split}
\end{align}
by \eqref{def-gamma-s}.
The auxiliary moments are $\gamma_{-2n+1},\gamma_{6n-2}$, and $\gamma_{-2n}$. 
Namely, we may rewrite moments for $j=0$, 
\begin{align*}
\gamma_{0}=\beta_{0,0},\ \ \gamma_1=\beta_{1,0},\ \ \ldots,\ \
    \gamma_{2n}=\beta_{2n,0};
\end{align*}
for $j=1$, 
\begin{align*}    
\gamma_{-1}&=\frac{1}{q_0}
    \Big(\beta_{0,1}-\sum_{t=0}^{3} q_{1,t}\gamma_t\Big),\\
\gamma_{2n+1}&=
    \beta_{2n-2,1}-\sum_{t=-1}^{2} q_{1,t}\gamma_{t+2n-2},\\
\gamma_{2n+2}&=
    \beta_{2n-1,1}-\sum_{t=-1}^{2} q_{1,t}\gamma_{t+2n-1};\\
&\ \vdots
\end{align*}
for $j=2n$,  
\begin{align*}    
\gamma_{-k}&=\frac{1}{q_0^k}
    \Big(\beta_{0,k}-\sum_{t=-k+1}^{3k} q_{k,t}\gamma_t\Big),\\ 
\gamma_{2n+2k-1}&=
    \beta_{2n-k-1,k}-\sum_{t=-k}^{3k-1} q_{k,t}\gamma_{t+2n-k-1},\\
\gamma_{2n+2k}&=
    \beta_{2n-k,1}-\sum_{t=-k}^{3k-1} q_{1,t}\gamma_{t+2n-k};
\end{align*}
for $j=2n-1$,  
\begin{align*} 
\gamma_{-2n+1}&=\frac{1}{q_0^{2n-1}}
    \Big(\beta_{0,2n-1}-\sum_{t=-2n+2}^{6n-4} q_{2n-1,t}\gamma_t
    -\mathbf{\gamma_{6n-3}}
    \Big),\\    
\gamma_{6n-2}&=
    \beta_{1,2n-1}-\sum_{t=-2n+1}^{6n-5} q_{2n-1,t}\gamma_{t+1}-
    q_{2n-1,6n-4}\mathbf{\gamma_{6n-3}};\\
\end{align*}
for $j=2n$,  
\begin{align*}   
\gamma_{-2n}&=\frac{1}{q_0^{2n}}
    \Big(\beta_{0,2n}-\sum_{t=-2n+1}^{6n-4} q_{2n,t}\gamma_t
    -q_{2n,6n-4}\mathbf{\gamma_{6n-3}}
    -q_{2n,6n-3}\gamma_{6n-2} -\\
    &\hspace{2cm} q_{2n,6n-2}\mathbf{\gamma_{6n-1}}-\mathbf{\gamma_{6n}}\Big).
\end{align*}
We need to introduce new variables 
    $\mathbf{t}_1$, $\mathbf{t}_2$, $\mathbf{t}_3$ 
for 
    $\mathbf{\gamma_{6n-3}}$, $\mathbf{\gamma_{6n-1}}$,
    $\mathbf{\gamma_{6n}}$,
respectively.
Let $\mathbf{\ut}:=(\mathbf t_1,\mathbf t_2,\mathbf t_3)$.
Then $A_{\gamma(\mathbf{\ut})}$ is of the form

\begin{small}
\begin{equation*}
    \kbordermatrix{
                & T^{-n} & T^{-n+1} & \cdots & 1 & \cdots  &T^{3n-2} & T^{3n-1} &T^{3n}\\
    T^{-n} & 
        \gamma_{-2n}(\mathbf{\ut}) & \gamma_{-2n+1}(\mathbf{t}_1) & \cdots & \gamma_{-n} &\cdots & 
            \gamma_{2n-2} & \gamma_{2n-1} & \gamma_{2n}\\
    T^{-n+1} & 
        \gamma_{-2n+1}(\mathbf{t}_1) & \gamma_{-2n+2} & \cdots & \gamma_{-n+1} &\cdots & \gamma_{2n-1} & \gamma_{2n} & \gamma_{2n+1}\\
    \vdots & \vdots &\vdots&&\vdots&&\vdots&\vdots&\vdots\\
    \textit{1} & 
        \gamma_{-n} & \gamma_{-n+1} & \cdots & \gamma_{0} &\cdots & \gamma_{3n-2} & \gamma_{3n-1} & \gamma_{3n}\\
    \vdots & \vdots &\vdots&&\vdots&&\vdots&\vdots&\vdots\\
    T^{3n-2} & 
        \gamma_{2n-2} & \gamma_{2n-1} & \cdots &\gamma_{3n-2} &
        \cdots & \gamma_{6n-4} & \mathbf{t}_1 & \gamma_{6n-2}(\mathbf{t}_1) \\
    T^{3n-1} & 
        \gamma_{2n-1} & \gamma_{2n} & \cdots &\gamma_{3n-1} & \cdots & \mathbf{t}_1 &\gamma_{6n-2}(\mathbf t_1) & \mathbf{t}_2\\
    T^{3n} & 
        \gamma_{2n} & \gamma_{2n+1} & \cdots &\gamma_{3n} & \cdots & \gamma_{6n-2}(\mathbf{t}_1) & \mathbf{t}_2 & \mathbf{t}_3
    }
\end{equation*}
\end{small}
for the corresponding Hankel matrix,
where
\begin{align*} 
    \gamma_{-2n}(\mathbf{\ut})
    &=:C+D\boldt_1+E \mathbf{t}_2-q_0^{-2n}\mathbf{t}_3,\\
    \gamma_{-2n+1}(\mathbf t_1)
    &=:F-q_0^{-2n+1}\mathbf{t}_1,\\
    \gamma_{6n-2}(\mathbf{t}_1)
    &=:G-H\mathbf{t}_1.
\end{align*}
For $i\leq j$ we write 
    $$\vec{T}^{(i,j)}
   : =
    \begin{pmatrix}
        T^{i} & T^{i+1} & \cdots & T^{j}
    \end{pmatrix}.
    $$
\subsection{Existence of a positive definite completion $A_{\gamma(\ut)}$}
\label{sec:m=4-pure}

In this subsection,  we will characterize the existence of $t_1,t_2,t_3$ such that $A_{\gamma(t_1,t_2,t_3)}$ is positive definite. The latter is a sufficient condition for the existence of a $\calZ(p)$--rm for $\beta$ by \cite[Theorem 3.9]{CF91} and Theorem \ref{r-main1} above.

Assume that 
\begin{align}\label{pure-assumpiton} 
(A_{\gamma(\mathbf{\ut})})|_{\vec T^{(-n+1,3n-2)}} 
\text{ is positive definite.}    
\end{align}
We then focus on the submatrix 
\begin{equation} 
\label{def-F1}
    F_1(\boldt_1):=(A_{\gamma(\mathbf{\ut})})|_{\vec T^{(-n+1,3n-1)}}.
\end{equation}
Note that
$$p(\mathbf{t}_1)
:=\det\big(F_1(\boldt_1)\big)
=c_2\mathbf{t}_1^2+c_1\mathbf{t}_1+c_0
$$
with $c_0,c_1,c_2\in \RR$. 
Assuming that $(A_{\gamma(\mathbf{\ut})})|_{\vec T^{(-n+1,3n-2)}}\succ 0$, it follows that $c_2<0$.
For the existence of a positive definite completion $A_{\gamma(\ut)}$, the first necessary condition is the following:
\begin{equation}
\label{nec-cond-1}
    p(\mathbf{t_1}) \text{ has a real zero}.
\end{equation}
Assume \eqref{nec-cond-1} is satisfied. Let $(t_1)_-, (t_1)_+\in \RR$,
with $(t_1)_-\leq (t_1)_+$, be real zeroes of $p(\boldt_1)$.  
Then 
    $F_1(\boldt_1)$ 
is positive definite on 
the interval
    $((t_1)_-,(t_1)_+)$,
and positive semidefinite but not definite in $( (t_1)_-,(t_1)_+ )$.
The question is, whether there exists a choice of
\begin{equation}
\label{sol-cond1}
    t_1\in ((t_1)_-,(t_1)_+),
\end{equation}
such that there are $t_2,t_3\in \RR$ with
    $A_{\gamma(\ut)}$ being positive definite.

Second, assuming \eqref{sol-cond1} holds, we observe the submatrix
\begin{equation} 
\label{def-F2}
F_2(\mathbf{\ut}):=\big(A_{\gamma(\mathbf{\ut})}\big)|_{\vec T^{(-n+1,3n)}}.
\end{equation}
By Theorem \ref{r-smu}, we see that
\begin{align}
\label{psd-ineq-1}
\begin{split}
    &F_2(\ut)\succeq 0 
    \quad
    \iff
    \quad
    t_3
    \geq
    \begin{pmatrix}
    z_1^T &
    \gamma_{6n-2}(t_1) &
    t_2
    \end{pmatrix}
    \Big(F_1(t_1)\Big)^{-1}
    \begin{pmatrix}
    z_1\\
    \gamma_{6n-2}(t_1)\\
    t_2
    \end{pmatrix}
\end{split}
\end{align}
with 
    $
    z_1:=
    \begin{pmatrix}
        \gamma_{2n+1} & \gamma_{2n+2} & \cdots & \gamma_{6n-4} &
        t_1
    \end{pmatrix}^T.
    $
Writing 
\begin{align*}
    \widetilde\gamma
    &:=
    (\gamma_{-2n+2},\gamma_{-2n+1},\ldots,\gamma_{6n-4}),\\
    c_1
    &:=
    \begin{pmatrix}
        \gamma_{2n} & \gamma_{2n+1} & \cdots & \gamma_{6n-4} & t_1
    \end{pmatrix}^T,\\
    w_1
    &:=\gamma_{6n-2}(t_1)-c_1^T A_{\widetilde \gamma}^{-1}c_1,
\end{align*}
we have 
$$
\Big(F_1(t_1)\Big)^{-1}=
\begin{pmatrix}
    A^{-1}_{\widetilde \gamma} & 0 \\ 
    0 & 0
\end{pmatrix}
+
\frac{1}{w_1}
\begin{pmatrix}
    A^{-1}_{\widetilde \gamma}c_1c_1^tA^{-1}_{\widetilde\gamma} & 
        -A^{-1}_{\widetilde \gamma}c_1\\ 
    -c_1^TA^{-1}_{\widetilde \gamma}& 1
\end{pmatrix}.
$$
Using this in the inequality \eqref{psd-ineq-1},  we know 
that $F_2(\ut)\succ 0$ is equivalent to 
\begin{align}
\label{ineq-3}
\begin{split}t_3
&>
\begin{pmatrix}
    z_1^T & \gamma_{6n-2}(t_1)
\end{pmatrix} 
\left(A_{\widetilde \gamma}^{-1}
+
\frac{1}{w_1} A_{\widetilde \gamma}^{-1} c_1c_1^T 
A_{\widetilde \gamma}^{-1}\right)
\begin{pmatrix}
    z_1 \\ \gamma_{6n-2}(t_1)
\end{pmatrix}\\
&\hspace{2cm}
-
\frac{2}{w_1}
\begin{pmatrix}
z_1^T & \gamma_{6n-2}(t_1)
\end{pmatrix}
A_{\widetilde \gamma}^{-1}c_1
+
\frac{t_2^2}{w_1}.
\end{split}
\end{align}

Finally, assuming that \eqref{sol-cond1} and \eqref{ineq-3} hold, 
we now examine the entire matrix $A_{\gamma(\mathbf{\ut})}$. 
By Thorem \ref{r-smu}, we see that 
\begin{align}
\label{ineq-4}
\begin{split}
    &A_{\gamma(\ut)}\succeq 0
    \quad\iff\quad 
    \gamma_{-2n}(\ut)\geq 
    \begin{pmatrix}
    z_2^T & \gamma_{2n}    
    \end{pmatrix} 
    \left(F_2(\ut)\right)^{-1} 
    \begin{pmatrix}
    z_2 \\ \gamma_{2n} 
    \end{pmatrix},
\end{split} 
\end{align}
where 
$
z_2:=
\begin{pmatrix}
    \gamma_{-2n+1}(t_1) & \gamma_{-2n+2} & \cdots & \gamma_{2n-1}
\end{pmatrix}^T.
$
Writing 
\begin{align*}
B
&:=F_1(t_1),\\
c_2
&:=
\begin{pmatrix}
\gamma_{2n+1} & \cdots & \gamma_{6n-2}(t_1) & t_2
\end{pmatrix}^T,\\
w_2
&:=
t_3-c_2^T(F_1(t_1))^{-1}c_2,
\end{align*}
we see that
$$
(F_2(\ut))^{-1}
=
\begin{pmatrix}
    B^{-1} & 0 \\ 
    0 & 0
\end{pmatrix}
+
\frac{1}{w_2}
\begin{pmatrix}
    B^{-1}c_2c_2^tB^{-1} & 
        -B^{-1}c_2\\ 
    -c_2^TB^{-1}& 1
\end{pmatrix}.
$$
Using this in the inequality \eqref{ineq-4},  it follows  that
$A_{\gamma(\ut)}\succ 0$ is equivalent to
\begin{align}
\label{ineq-5}
\begin{split}
\gamma_{-2n}(\ut)
&>
z_2^T
\left(
B^{-1}
+
\frac{1}{w_2} B^{-1} c_2c_2^T 
B^{-1}
\right)
z_2
-
\frac{2\gamma_{2n}}{w_2}
z_2^TB^{-1}c_2
+
\frac{\gamma_{2n}^2}{w_2}.
\end{split}    
\end{align}

\smallskip

By Theorem \ref{r-main1},
the arguments above give sufficient conditions to solve the $p$-pure TMP.

\begin{theorem}[Purely pure case]\label{r-main4} 
Let $ p(x,y)= xy -\sum_{s=0}^4 q_sx^s$ with $q_i\in\re$, $q_0\neq 0$ $q_4=1$.
    Let \( \beta \equiv \q \beta_{i,j}^{(2n)} \w \) for $i,j\in \ZZ_+$, $i+ j \le 2n$, be a $p$-pure sequence. Assume the notation above and \eqref{pure-assumpiton} holds.
If
there exists a triple $(t_1,t_2,t_3)\in \RR^3$ such that
\eqref{nec-cond-1}, \eqref{sol-cond1}, \eqref{ineq-3}, \eqref{ineq-5} hold, then
$\beta$ admits a $\calZ(p)$--representing measure.
\end{theorem}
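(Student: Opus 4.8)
The plan is to show that the four displayed hypotheses \eqref{nec-cond-1}, \eqref{sol-cond1}, \eqref{ineq-3} and \eqref{ineq-5}, together with the standing assumption \eqref{pure-assumpiton}, force the whole Hankel matrix $A_{\gamma(\ut)}$ to be positive definite for the given triple $(t_1,t_2,t_3)$, and then to feed this into Theorem \ref{r-main1}. Since the entire subsection has been arranged so that each displayed condition is an equivalent reformulation of the positive definiteness of one member of a nested chain of principal submatrices, the argument is essentially a bookkeeping of successive Schur complements rather than a new computation; the substantive work has already been done in deriving \eqref{psd-ineq-1}--\eqref{ineq-5}.

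Concretely, I would organize the chain of principal submatrices
\[
M_0:=(A_{\gamma(\ut)})|_{\vec T^{(-n+1,3n-2)}}\subset M_1:=F_1(t_1)\subset M_2:=F_2(\ut)\subset M_3:=A_{\gamma(\ut)},
\]
where $M_1$ borders $M_0$ by $T^{3n-1}$, $M_2$ borders $M_1$ by $T^{3n}$, and $M_3$ borders $M_2$ by $T^{-n}$, so that each step adds exactly one row and column. By \eqref{pure-assumpiton} we have $M_0\succ 0$, which anchors the induction and supplies the invertibility needed to apply Theorem \ref{r-smu} at each stage. The step $M_0\subset M_1$ reduces, because $M_0\succ 0$ and $c_2<0$, to $\det F_1(t_1)=p(t_1)>0$; by \eqref{nec-cond-1} the quadratic $p$ has real zeros $(t_1)_-\le(t_1)_+$, and \eqref{sol-cond1} places $t_1$ strictly between them, giving $M_1\succ 0$. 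Bordering by $T^{3n}$ and using the displayed block-inverse of $F_1(t_1)^{-1}$ turns $M_2\succ 0$ into precisely \eqref{ineq-3}, and a final bordering by $T^{-n}$, using the block-inverse of $F_2(\ut)^{-1}$, turns $M_3=A_{\gamma(\ut)}\succ 0$ into precisely \eqref{ineq-5}. Hence the four hypotheses yield $A_{\gamma(\ut)}\succ 0$.

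With $A_{\gamma(\ut)}\succ 0$ established for this choice of free moments $\gamma_{6n-3}=t_1$, $\gamma_{6n-1}=t_2$, $\gamma_{6n}=t_3$, condition $(iii)(a)$ of Theorem \ref{r-main1} holds, so by the equivalence $(iii)\Leftrightarrow(i)$ there the sequence $\beta$ admits a $\calZ(p)$--representing measure. Equivalently, one may invoke \cite[Theorem 3.9]{CF91} to produce an $\RR$--representing measure for the positive definite strong sequence $\gamma$; such a measure automatically satisfies $\mu(\{0\})=0$ since $\gamma$ carries the finite negative moment $\gamma_{-2n}$, and Theorem \ref{r-main1} then transfers it to a $\calZ(p)$--measure for $\beta$.

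The only place where genuine care is required is the orientation and order of the borderings: the first two extend the matrix at its bottom-right corner while the last extends it at its top-left corner, and at each stage the inner block must be positive definite so that its ordinary inverse (not merely a generalized inverse) is the one appearing in Theorem \ref{r-smu}. This is exactly why \eqref{pure-assumpiton} is imposed and why \eqref{sol-cond1} must be the strict open-interval condition rather than permitting the endpoints $(t_1)_\pm$ (where $F_1$ degenerates). Once this chain of strict inequalities is verified, the conclusion is immediate, and no reverse implication is needed since the theorem asserts only a sufficient condition.
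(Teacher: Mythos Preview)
Your proposal is correct and follows essentially the same approach as the paper: the subsection preceding the theorem is precisely organized so that \eqref{pure-assumpiton}, \eqref{nec-cond-1}, \eqref{sol-cond1}, \eqref{ineq-3}, \eqref{ineq-5} are the successive Schur-complement conditions making the chain $M_0\subset F_1(t_1)\subset F_2(\ut)\subset A_{\gamma(\ut)}$ positive definite, after which Theorem~\ref{r-main1}\,(iii)(a) (equivalently \cite[Theorem~3.9]{CF91}) yields the $\calZ(p)$--representing measure. If anything, you have spelled out the bookkeeping more explicitly than the paper, which simply notes that ``the arguments above'' together with Theorem~\ref{r-main1} give the result.
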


\subsection{Existence of positive semidefinite completion $A_{\gamma(\ut)}$ with a $\RR$--rm vanishing in $\{0\}$}
In this subsection,  we study the existence of a $\RR$--rm for $\gamma(\ut)$ vanishing in $\{0\}$ in case $\beta$
is not $p$-pure or a triple $(t_1,t_2,t_3)\in \RR^3$ satisfying the conditions in Theorem \ref{r-main4} does not exist. The main result is Theorem \ref{r-main4-non-definite} below.\\

We say a column relation in  $A_{\gamma(t)}$ of the form 
\begin{equation}
\label{relation-non-pure}
    \sum_{i=i_1}^{i_2} a_i T^{i}=\mbf{0},
\end{equation}
where $a_i\in \RR$,
$-n\leq i_1<i_2\leq 3n$, $a_{i_1}\neq 0$, $a_{i_2}\neq 0$,
\textbf{propagates through $A_{\gamma(\ut)}$}, if 
\begin{align}
\label{propagation-relations}
\begin{split}
    \sum_{i=i_1}^{i_2} a_i T^{i-j}&=\mbf{0}\quad \text{ for } j=1,\ldots,n-i_1,\\
    \sum_{i=i_1}^{i_2} a_i T^{i+j}&=\mbf{0}\quad \text{ for } j=1,\ldots,3n-i_2,
\end{split}
\end{align}
are also relations of $A_{\gamma(\ut)}$.

\begin{theorem}[Singular case]\label{r-main4-non-definite} 
Let $ p(x,y)= xy -\sum_{s=0}^4 q_sx^s$ with $q_i\in\re$, $q_0\neq 0$ $q_4=1$.
    Let \( \beta \equiv \q \beta_{i,j}^{(2n)} \w \) for $i,j\in \ZZ_+$, $i+ j \le 2n$, be a sequence such that there does not exist a triple $(t_1,t_2,t_3)\in \RR^3$ satisfying the conditions in Theorem \ref{r-main4}. 
    Assume the notation of Section \ref{sec:m=4}, and Subsection \ref{sec:m=4-pure} above. 
    We write $C:=(A_{\gamma(\mathbf{\ut})})|_{\vec T^{(-n+1,3n-2)}}$.
    Then $\beta$ admits a $\calZ(p)$--representing measure if and only if one of the following holds:
    \begin{enumerate}[(i)]
    \item\label{m-4-sing-pt1} 
        $C\succeq 0$, $C\not\succ 0$ and a relation \eqref{relation-non-pure} 
        satisfied in $C$ propagates through $A_{\gamma(\ut)}$ for $\ut\in \RR^3$, 
        which is uniquely determined using \eqref{propagation-relations}.
    \item\label{m-4-sing-pt2}
        $C\succ 0$, 
        \eqref{nec-cond-1} holds,
        and the relation \eqref{relation-non-pure} satisfied in $F_1((t_1)_-)$,
        for $F_1$ defined by \eqref{def-F1},
        propagates through $A_{\gamma(\ut)}$ for $(t_2,t_3)\in \RR^2$, 
        uniquely determined using \eqref{propagation-relations}.
    \item\label{m-4-sing-pt3} 
        $C\succ 0$, 
        \eqref{nec-cond-1} holds,
        and the relation \eqref{relation-non-pure} satisfied in $F_1((t_1)_+)$,
        for $F_1$ defined by \eqref{def-F1},
        propagates through $A_{\gamma(\ut)}$ for $(t_2,t_3)\in \RR^2$, 
        uniquely determined using \eqref{propagation-relations}.
    \item\label{m-4-sing-pt4}
        $C\succ 0$, \eqref{nec-cond-1} holds, $t_1\in ((t_1)_,(t_1)_+)$,
        $t_3$ is equal to the right hand side of \eqref{ineq-3} for some $t_2$,
        and the relation \eqref{relation-non-pure} satisfied in $F_2(\ut)$,
        for $F_2$ defined by \eqref{def-F2},
        propagates through $A_{\gamma(\ut)}$.
    \item\label{m-4-sing-pt5}
        $C\succ 0$, \eqref{nec-cond-1} holds, $t_1\in ((t_1)_,(t_1)_+)$,
        $t_3$ satisfies \eqref{ineq-3} for some $t_2$,
        $\gamma_{-2n}(\ut)$ is equal to the right hand side of \eqref{ineq-5},
        and in the relation \eqref{relation-non-pure}, satisfied in $A_{\gamma(\ut)}$,
        we have 
        $i_1=-n$ and $i_2=3n$.
    \end{enumerate}
\end{theorem}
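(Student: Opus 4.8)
The plan is to reduce to the strong Hamburger completion problem via Theorem~\ref{r-main1} and then peel the nested chain of principal submatrices $C\subset F_1(t_1)\subset F_2(\ut)\subset A_{\gamma(\ut)}$ from the inside out using repeated Schur complements (Theorem~\ref{r-smu}). By Theorem~\ref{r-main1}, $\beta$ admits a $\calZ(p)$--representing measure if and only if for some $\ut=(t_1,t_2,t_3)$ the sequence $\gamma(\ut)$ has an $\RR$--representing measure vanishing in $\{0\}$, equivalently (part~(iii) there) $A_{\gamma(\ut)}\succeq 0$ together with either positive definiteness or the two-sided rank condition $\rank A_{\gamma(\ut)}=\rank A_{\gamma^{(-2n,6n-2)}(\ut)}=\rank A_{\gamma^{(-2n+2,6n)}(\ut)}$. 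The hypothesis that no triple satisfies Theorem~\ref{r-main4} means, by the equivalences of Subsection~\ref{sec:m=4-pure} in which \eqref{nec-cond-1}, \eqref{sol-cond1}, \eqref{ineq-3}, \eqref{ineq-5} were shown to characterize $A_{\gamma(\ut)}\succ 0$, that no positive definite completion exists. Thus I must locate a positive semidefinite but singular completion satisfying the two-sided rank condition, and the theorem becomes a classification of when one exists.

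The pivot of both implications is that, for a positive semidefinite Hankel matrix, the two-sided rank condition is equivalent to the existence of a column relation \eqref{relation-non-pure} that propagates \eqref{propagation-relations}: the upward shifts express $T^{3n}$ through lower columns, so deleting the last column preserves rank, and the downward shifts do the same for $T^{-n}$ and the first column; this is the content of \cite[Theorem~3.1]{Zal22j}. Conversely, if such a relation holds on a window whose Hankel submatrix is positive semidefinite, the recursion fixes all remaining entries and yields a global positive semidefinite Hankel matrix of the same rank, that is, a recursively generated extension.

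Equipped with the pivot, I would run the case analysis. Since no positive definite completion exists, along $C\subset F_1\subset F_2\subset A_{\gamma(\ut)}$ there is a unique innermost block that is positive semidefinite but not definite, and the five cases record which block this is. If $C\not\succ 0$ we are in case~(i): any relation of $C$ must propagate outward, and \eqref{propagation-relations} then fixes $\ut$ uniquely. If $C\succ 0$, condition \eqref{nec-cond-1} is forced, since otherwise $p(t_1)=\det F_1(t_1)<0$ for all $t_1$ and no psd completion exists; now $F_1(t_1)\succeq 0$ exactly on $[(t_1)_-,(t_1)_+]$, singular at the endpoints, giving cases~(ii) and~(iii); for interior $t_1$ the Schur complement of $F_1$ in $F_2$ vanishes precisely on the boundary of \eqref{ineq-3}, giving case~(iv); and if $F_2\succ 0$ too, the final Schur complement vanishes on the boundary of \eqref{ineq-5}, forcing the full matrix to be singular with a relation necessarily spanning $i_1=-n$ and $i_2=3n$, which is case~(v). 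In every case the forward direction assigns a given singular completion to exactly one case, while the backward direction turns the stated boundary condition and propagating relation, via the pivot, into a psd completion with the two-sided rank condition and hence a representing measure through Theorem~\ref{r-main1}.

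I expect the main obstacle to be the sufficiency half of the pivot: showing that once the relation propagates the recursively completed Hankel matrix is genuinely positive semidefinite rather than merely rank-consistent, and that the free moments forced by \eqref{propagation-relations} are real and compatible with the auxiliary-moment formulas defining $\gamma_{-2n}(\ut)$, $\gamma_{-2n+1}(t_1)$ and $\gamma_{6n-2}(t_1)$. Closely tied to this is verifying exhaustiveness and mutual exclusivity of the five cases, which reduces to confirming that the one-sided Schur-complement bounds governing $F_2$ and $A_{\gamma(\ut)}$ each yield a single boundary case, in contrast to the two-sided bound for $F_1$ that yields the pair~(ii),~(iii).
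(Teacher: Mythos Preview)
Your plan is correct and follows essentially the same route as the paper: reduce via Theorem~\ref{r-main1} to the strong Hamburger problem, invoke \cite[Theorem~3.1]{Zal22j} so that the two-sided rank condition becomes propagation of a Hankel column relation, and then run the case split on which member of the nested chain $C\subset F_1\subset F_2\subset A_{\gamma(\ut)}$ first fails to be positive definite. The paper's proof is terser---it appeals to the extension principle \cite[Proposition~2.4]{Fia95} to force any relation in an inner block to persist in every psd completion, which dispatches exactly the ``main obstacle'' you flagged---but the logical structure and the five-case breakdown are the same as yours.
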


\begin{proof}
By assumption, there does not exist a triple $(t_1,t_2,t_3)\in \RR^3$ such 
that $A_{\gamma(\ut)}\succ 0$.  
We distinguish a few cases based on the point where non-definiteness occurs.\\

\noindent 
\textbf{Case 1:} 
\textit{$C\succeq 0$ and $C\not\succ 0$.}

In this case, there is a relation of the form \eqref{relation-non-pure},
where $a_i\in \RR$,
$-n+1\leq i_1<i_2\leq 3n-2$, $a_{i_1}\neq 0$, $a_{i_2}\neq 0$,
among the columns of $C$. By the extension principle \cite[Proposition 2.4]{Fia95}, this relation must hold in any positive semidefinite  completion 
$A_{\gamma(\ut)}$.
By \cite[Theorem 3.1]{Zal22j}, the existence of a $\RR$--rm canishing in $\{0\}$ is equivalent to well--definedness of the completion $A_{\gamma(\ut)}$ determined by propagating the relation \eqref{relation-non-pure} in both directions by \eqref{propagation-relations}.
This gives \eqref{m-4-sing-pt1}.\\

\noindent 
\textbf{Case 2:} 
\textit{$C\succ 0$; \eqref{nec-cond-1} holds and $(t_1)_-$ is defined as in Subsection \ref{sec:m=4-pure}.}

In this case, there is a relation of the form \eqref{relation-non-pure} among columns of $F_1((t_1)_-)$, where $F_1(\boldt_1)$ is defined by \eqref{def-F1}, with $i_2=3n-1$. As in Case 1 above, well--definedness of the relations \eqref{propagation-relations} characterizes the existence of a $(\RR\setminus \{0\})$--rm for $\gamma(\ut)$.
This shows \eqref{m-4-sing-pt2}.\\

\noindent 
\textbf{Case 3:} 
\textit{$C\succ 0$; \eqref{nec-cond-1} holds and $t_1=(t_1)_+$ with $(t_1)_+$ defined as in Subsection \ref{sec:m=4-pure}.}

This case is analogous to Case 2 and completes \eqref{m-4-sing-pt3}.\\

\noindent 
\textbf{Case 4:} 
\textit{$C\succ 0$; \eqref{nec-cond-1} holds, $t_1\in ((t_1)_- ,(t_1)_+)$
and $t_3$ is equal to the right hand side of \eqref{ineq-3} for some $t_2$.
}

In this case, there is a relation of the form \eqref{relation-non-pure} among columns of $F_2(\ut)$, where $F_2(\underline{\boldt})$ is defined by \eqref{def-F2}, with $i_2=3n$. As in Case 1 above, well--definedness of the relations \eqref{propagation-relations} characterizes the existence of a $(\RR\setminus \{0\})$--rm for $\gamma(\ut)$.
This proves \eqref{m-4-sing-pt4}.\\

\noindent 
\textbf{Case 5:} 
\textit{$C\succ 0$; \eqref{nec-cond-1} holds, $t_1\in ((t_1)_-,(t_1)_+)$
and $t_3$ satisfies \eqref{ineq-3} for some $t_2$ and $\gamma_{-2n}(\ut)$
is equal to the right hand side of \eqref{ineq-5}.
}

In this case, there is a relation of the form \eqref{relation-non-pure} among columns of $A_{\gamma(\ut)}$ with $i_1=-n$;  a $\RR$--rm vanishing in $\{0\}$ for $A_{\gamma(\ut)}$
exists only if $i_2=3n$. Otherwise, the second type relations from \eqref{propagation-relations} would need to hold contradicting to positive definiteness of $F_2(\ut)$.
This verifies \eqref{m-4-sing-pt5}.
\end{proof}

\ti{Acknowledgment.} 
Example \ref{eg-1} was obtained using calculations with the software tool \ti{Mathematica} \cite{Wol}. 


\end{document}